\newtheorem{theorem}{Theorem}[section]
\newtheorem{lemma}[theorem]{Lemma}
\newtheorem{corollary}[theorem]{Corollary}
\newtheorem{proposition}[theorem]{Proposition}
\theoremstyle{definition}
\newtheorem{definition}[theorem]{Definition}
\newtheorem{example}[theorem]{Example}
\newtheorem{remark}[theorem]{Remark}
\def\AA{\mathcal A} 
\def\CB{\mathrm CB} 
\def\CC{\mathcal C} 
\def\cl{\gamma \circ \Sigma}%{\mathrm K} 
\def\CMR{{\mathcal C} (R)}
\def\End{\mathrm{End}}
\def\sEnd{\underline{\mathrm{End}}}
\def\FF{\mathcal{F}}
\def\GG{\mathcal{G}}
\def\Hom{\mathrm{Hom}}
\def\Im{\mathrm{Im}}
\def\KGdim{\mathrm{KGdim}\, }
\def\m{\mathfrak m}
\def\mod{\mathrm{mod}}
\def\modCC{{\mathrm{mod}}\, \CC }
\def\p{\mathfrak p}
\def\rad{{\mathrm{rad}}}
\def\SS{{\mathcal S}}
\def\sCC{\underline{{\mathcal C}}}
\def\sCMR{\underline{{\mathcal C}}(R)} 
\def\sEnd{\underline{\mathrm{End}}}
\def\sHom{\underline{\mathrm{Hom}}}
\def\sHomR{\underline{\mathrm{Hom}}_R}
\def\modsC{{\mathrm{mod}}\,{\sCMR }}
\def\modsCC{{\mathrm{mod}}\,{\sCC }}
\def\modR{{\mathrm{mod}}\, R}
\def\SpC{{\mathsf{Sp}}\, {\mathcal C}}
\def\SpCM{{\mathsf{Sp}}\, {\CMR}}
\def\Spec{{\mathrm{Spec}}}
\def\srad{\underline{\mathrm{rad}}}
\def\TT{\mathcal{T}}
\def\XX{\mathcal{X}}
\def\YY{\mathcal{Y}}
\begin{document}
%%%%%%%%%%%%%%%%%%%%%%%%%%%%%%%
%%%%%%%%%%% title  %%%%%%%%%%%%
%%%%%%%%%%%%%%%%%%%%%%%%%%%%%%%
\title[The spectrum of a category of MCM modules] {The spectrum of a category of maximal Cohen-Macaulay modules}
\author{Naoya Hiramatsu} 
\address{Institute for the Advancement of Higher Education, Okayama University of Science,
1-1 Ridai-cho, Kita-ku, Okayama 700-0005, Japan}
\email{n-hiramatsu@ous.ac.jp} 
%\thanks{The author was partly supported by JSPS KAKENHI Grant Number 21K03213.} 
\subjclass{Primary  13C14;  Secondary 16G60.}
\keywords{Cohen--Macaulay modules, Ziegler spectrum, Cantor-Bendixson rank, $\mathrm{CM}_+$-finite representation type.}
%%%%%%%%%%%%%%%%%%%%%%
%%%%%%%%%%%%%%%%%%%%%%
\maketitle
%%%%%%%%%%%%%%%%%%%%%%
%%%%%%%%%%%%%%%%%%%%%%
\begin{abstract}
We introduce an analog of the Ziegler spectrum for maximal Cohen-Macaulay modules over a complete Cohen-Macaulay local ring. 
We define a topology on the space of isomorphism classes of indecomposable maximal Cohen-Macaulay modules and investigate the topological structure. 
We also calculate the Cantor-Bendixson rank for a ring which is of $\mathrm{CM}_+$-finite representation type.
\end{abstract}

\section{Introduction}
The Ziegler spectrum of modules was introduced as a model-theoretic perspective on module theory \cite{Z84}. 
It is a topological space whose points are the isomorphism classes of indecomposable pure-injective modules and defined in terms of solution sets to certain types of linear conditions. 
The topology has an equivalent definition, that is, it can be defined in terms of morphisms between finitely presented functors.     
Many studies of the Ziegler spectrum are given in the context of the representation theory of algebras \cite{Her97, Kr97, P18, LP19, Nak22}.  
In this paper, we consider an analog of the Ziegler spectrum for a (stable) category of maximal Cohen-Macaulay (abbr. MCM) modules over a complete Cohen-Macaulay local ring.

Let $R$ be a complete Cohen--Macaulay local ring. 
We denote by $\CMR$ the category of MCM $R$-modules and by $\sCMR$ the stable category of $\CMR$. 
We denote by $\modsC$ the category of finitely presented contravariant additive functors. 
We put $\SpCM$ the set of isomorphism classes of the indecomposable MCM $R$-modules except $R$ and $0$.
For a subset $\XX$ of $\SpCM$, we denote by $\Sigma (\XX )$ the subcategory of $\modsC$ formed by the functors $F$ such that $F(X)=0$ for all $X \in \XX$. 
For a subcategory $\FF$ of $\modsC$, we denote by $\gamma (\FF )$ the subset of $\SpCM$ satisfying $F(X) = 0$ for all $F \in \FF$. 
By using the assignment $\Sigma$ and $\gamma$, we can define a closure operator on $\SpCM$, which is an analog of the Ziegler spectrum.

\begin{theorem}
The assignment 
$
\XX \mapsto \gamma \circ \Sigma (\XX )
$ 
is a is a Kuratowski closure operator on $\SpCM$. 
In particular, it induces a topology on $\SpCM$. 
\end{theorem}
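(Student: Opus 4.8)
The plan is to verify the four Kuratowski axioms $\cl(\emptyset)=\emptyset$, $\XX \subseteq \cl(\XX)$, $\cl(\cl(\XX))=\cl(\XX)$, and $\cl(\XX\cup\YY)=\cl(\XX)\cup\cl(\YY)$. The starting observation is that $\Sigma$ and $\gamma$ form a Galois connection between the poset of subsets of $\SpCM$ and the poset of subcategories of $\modsC$: both are inclusion-reversing, and directly from the definitions $\XX \subseteq \gamma\Sigma(\XX)$ and $\FF \subseteq \Sigma\gamma(\FF)$. Extensivity is one of these unit inequalities; monotonicity (if $\XX\subseteq\YY$ then $\cl(\XX)\subseteq\cl(\YY)$), which I record for later use, holds because the composite of two inclusion-reversing maps preserves inclusions; and idempotency follows from the standard identity $\Sigma\gamma\Sigma=\Sigma$, whence $\cl\circ\cl=\gamma(\Sigma\gamma\Sigma)=\gamma\Sigma=\cl$. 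These are purely formal.

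For $\cl(\emptyset)=\emptyset$ the defining condition on $\Sigma(\emptyset)$ is vacuous, so $\Sigma(\emptyset)=\modsC$, and I must show that no $X\in\SpCM$ is annihilated by every functor. Since $X$ is indecomposable and not isomorphic to $R$ or $0$, its stable endomorphism ring $\sEnd(X)$ is nonzero, so the representable functor $\sHomR(-,X)\in\modsC$ has $\sHomR(X,X)=\sEnd(X)\neq0$; hence $X\notin\gamma(\modsC)$. The inclusion $\cl(\XX)\cup\cl(\YY)\subseteq\cl(\XX\cup\YY)$ is immediate from monotonicity, so only the reverse inclusion in the union axiom remains, and this is the real content.

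For the hard inclusion I would argue by contradiction: suppose $X\in\cl(\XX\cup\YY)$ but $X\notin\cl(\XX)$ and $X\notin\cl(\YY)$, so there exist $F\in\Sigma(\XX)$ and $G\in\Sigma(\YY)$ with $F(X)\neq0\neq G(X)$. The key structural input is that each $\Sigma(\XX)$ is a Serre subcategory of $\modsC$: evaluation $F\mapsto F(X)$ is exact on the abelian category $\modsC$ (exact sequences of finitely presented functors are computed pointwise), so its kernel is Serre, and $\Sigma(\XX)$ is an intersection of such kernels. Choosing $0\neq f\in F(X)$ and applying Yoneda produces a morphism $\sHomR(-,X)\to F$ with nonzero value at $X$; replacing $F$ by the image of this map, I may assume $F$ is a quotient of $\sHomR(-,X)$, still lying in $\Sigma(\XX)$ (closed under subobjects) with $F(X)\neq0$, and likewise for $G$ inside $\Sigma(\YY)$.

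Since $X$ is indecomposable, $\sEnd(X)$ is local, so $\sHomR(-,X)$ has a unique maximal subfunctor $\srad(-,X)$ and a simple top $S_X=\sHomR(-,X)/\srad(-,X)$ with $S_X(X)=\sEnd(X)/\srad\sEnd(X)\neq0$. Any proper quotient of $\sHomR(-,X)$ that is nonzero at $X$ must surject onto $S_X$, because a subfunctor $H$ with $H(X)\subsetneq\sEnd(X)$ is contained in the radical subfunctor. Applying this to the reduced $F$ and $G$ exhibits $S_X$ as a common quotient of both; as Serre subcategories are closed under quotients, $S_X\in\Sigma(\XX)\cap\Sigma(\YY)=\Sigma(\XX\cup\YY)$, while $S_X(X)\neq0$ contradicts $X\in\gamma\Sigma(\XX\cup\YY)=\cl(\XX\cup\YY)$. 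The main obstacle is exactly this union axiom: it is the only step that uses more than the formal Galois connection, relying on the abelian (Serre) structure of $\modsC$ and on the existence of the simple functor $S_X$, which in turn requires $\sEnd(X)$ to be local.
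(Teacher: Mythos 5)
Your formal reductions are all correct and match the paper's Lemma \ref{A4}: the Galois-connection argument for extensivity, monotonicity and idempotency, the computation $\cl(\emptyset)=\gamma(\modsC)=\emptyset$ via the representables, the pointwise-exactness argument that each $\Sigma(\XX)$ is Serre, and the Yoneda reduction of $F$ and $G$ to quotients of $\sHomR(-,M)$. The gap is in the very last step of the union axiom, which is the only step with real content. You conclude that $S_X\in\Sigma(\XX)\cap\Sigma(\YY)$ because ``Serre subcategories are closed under quotients,'' but $\Sigma(\XX)$ is by definition a subcategory of $\modsC$, the category of \emph{finitely presented} functors, and closure under quotients only applies to quotients that again lie in $\modsC$. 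The simple functor $S_X=\sHomR(-,X)/\srad(-,X)$ is finitely presented if and only if $\srad(-,X)$ is finitely generated, i.e.\ if and only if there is an AR sequence ending in $X$ --- this equivalence is exactly the content of Proposition \ref{A14}. Over a non-isolated singularity this fails for every $X$ that is not locally free on the punctured spectrum; for instance $X=(x)R$ over $R=k[[x,y]]/(x^2)$ (Example \ref{A8}) admits no AR sequence, so $S_X\notin\modsC$. Since $\gamma$, and hence $\cl$, only tests against finitely presented functors, producing a non-finitely-presented simple functor with $S_X(X)\neq 0$ gives no contradiction with $X\in\cl(\XX\cup\YY)$. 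Note that the case where your argument breaks is precisely the case where the theorem is interesting: when $R$ is an isolated singularity the topology is discrete (Corollary \ref{A15}) and the union axiom is easy, so the missing case is exactly the non-discrete one.

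The paper avoids this by never leaving $\modsC$: instead of passing to the simple top, it forms the pushout $H$ of $\Im f \leftarrow \sHomR(-,M) \rightarrow \Im g$ inside the abelian category $\modsC$, so $H$ is automatically finitely presented, and it is a quotient of $\Im f$ and of $\Im g$, hence lies in $\Sigma(\XX)\cap\Sigma(\YY)=\Sigma(\XX\cup\YY)$. The nonvanishing $H(M)\neq 0$ is then shown not via a simple quotient but by evaluating the exact sequence $\sHomR(-,M)\to \Im f\oplus\Im g\to H\to 0$ at $M$: if $H(M)=0$, the direct sum of the two nonzero cyclic right $\sEnd_R(M)$-modules $\Im f(M)\oplus \Im g(M)$ would be cyclic over the local ring $\sEnd_R(M)$, which is impossible since by Nakayama its top over the division ring $\sEnd_R(M)/\srad(M,M)$ has length at least $2$, while a cyclic module has top of length at most $1$. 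Your radical computation (a subfunctor of $\sHomR(-,X)$ whose value at $X$ is a proper right ideal is contained in $\srad(-,X)$) is correct as a statement in the large functor category, and the paper does use exactly this device in the proof of Proposition \ref{A14} --- but there finite presentation is available because $M$ is assumed isolated. To repair your proof, replace the common simple quotient $S_X$ by the pushout of the two image quotients and run the locality argument on cyclic modules as above.
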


The theorem is proved for a slightly more general category than $\CMR$.  

The Cantor-Bendixson rank $\CB$ measures the complexity of the topology. 
It measures how far the topology is from the discrete topology. 
We say that a Cohen--Macaulay local ring is $\mathrm{CM}_+$-finite if there exist only finitely many isomorphism classes of indecomposable MCM modules that are not locally free on the punctured spectrum. 

\begin{theorem}
If $R$ is $\mathrm{CM}_+$-finite then $\CB (\SpCM ) \leq 1$. 
\end{theorem}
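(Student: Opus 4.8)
The plan is to show that the first Cantor--Bendixson derivative $(\SpCM)'$ is contained in the finite set of indecomposables that fail to be locally free on the punctured spectrum, and that this finite set is discrete; then $(\SpCM)^{(2)}=\emptyset$, which is exactly $\CB (\SpCM )\le 1$.

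The heart of the argument is the claim that every $X\in\SpCM$ which is locally free on the punctured spectrum is an isolated point. To see this I would use Auslander's existence theorem: since $R$ is complete and $X$ is a non-free indecomposable MCM module that is locally free on the punctured spectrum, there is an almost split sequence in $\CMR$ ending in $X$, hence a right almost split morphism $E\to X$ in $\sCMR$. This produces an exact sequence $\sHom(-,E)\to\sHom(-,X)\to S_X\to 0$ in $\modsC$, where $S_X=\sHom(-,X)/\srad(-,X)$; thus the simple functor $S_X$ is finitely presented. For an indecomposable $Y\in\SpCM$ one has $S_X(Y)\neq 0$ precisely when $Y\cong X$, so the zero set $\{Z:S_X(Z)=0\}$, being of the form $\gamma (\FF )$, is closed by the identity $\cl\circ\gamma=\gamma$ of the preceding section, and its complement $\{X\}$ is open.

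Granting this, $\mathrm{CM}_+$-finiteness provides a finite set $F\subseteq\SpCM$ consisting of the indecomposables that are not locally free on the punctured spectrum, and every point outside $F$ is isolated; hence $(\SpCM)'\subseteq F$ is finite. It then remains to separate the points of $F$. Fixing $X\in F$ with the remaining points $Y_1,\dots ,Y_k$, I would take $G_X=\sHom(-,X)/H$, where $H$ is the subfunctor generated by finite generating sets of the modules $\sHom(Y_j,X)$; as these are finitely generated and $k$ is finite, $G_X$ is finitely presented. By construction $G_X(Y_j)=0$ for all $j$, whereas $G_X(X)\neq 0$ because $\mathrm{id}_X$ cannot factor through $\bigoplus_j Y_j$ in the Krull--Schmidt category $\sCMR$. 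Thus the open set $\{Z:G_X(Z)\neq 0\}$ meets $F$ only in $X$, so $F$ is discrete, and therefore so is $(\SpCM)'$, giving $(\SpCM)^{(2)}=\emptyset$.

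The main obstacle is the isolation claim of the second paragraph: the whole argument rests on the finite presentation of $S_X$, and this is precisely where Auslander's theorem on the existence of almost split sequences for modules locally free on the punctured spectrum is indispensable. Once that input is in place, the reduction to the finite set $F$, the explicit separating functors $G_X$, and the fact that supports of single finitely presented functors are open via the Galois correspondence are all routine.
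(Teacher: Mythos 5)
Your proposal is correct and follows essentially the same route as the paper: the same decomposition of $\SpCM$ into the locally-free-on-the-punctured-spectrum part (isolated via Auslander's existence of AR sequences and the finitely presented simple functor $S_X$, as in Proposition \ref{A14}) and the finite remainder, which is then shown to be discrete in the induced topology. Your explicit separating functors $G_X$ merely inline the paper's Lemma \ref{A12}/Proposition \ref{A13} ($T_1$) argument, which the paper invokes through the closures $\cl (X)=\{X\}$ in forming $V_M$.
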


This study was motivated by the work of Herzog\cite{Her97} and Krause\cite{Kr97}. 
Their studies discuss locally coherent subcategories which include infinitely generated modules. 
Our study considers only finitely generated modules, so it differs from their considerations (Remark \ref{A7}).

The organization of this paper is as follows. 
In Section \ref{A} we introduce a topology on $\SpC$ with an additive subcategory of finitely generated $R$-modules $\CC$ that is closed under kernels of epimorphisms and contains the free $R$-modules (Theorem \ref{A5}) and study the topological structure. 
We will give a bijection between closed subsets in $\SpCM$ and suitable Serre subcategories of $\modsC$ if $R$ is an isolated singularity (Proposition \ref{A10}). 
Section \ref{B} is devoted to the computation of the Cantor- Bendixson rank of $\SpCM$ (Theorem \ref{B8} ). 
%%%%%%%%%%%%%%%%%%%%%%%%%%
%%%%%%%%%%%%%%%%%%%%%%%%%%
%%%%%%%%%%%%%%%%%%%%%%%%%%
\section{The spectrum of the category of maximal Cohen-Macaulay modules}\label{A}

In this paper, we always assume that $R$ is a commutative complete Noetherian local ring with algebraic residue field $k$, and all modules are ``finitely generated" $R$-modules. 
We denote by $\modR$ the category of (finitely generated) $R$-modules. 
When we consider a subcategory $\CC$ of $\modR$, we mean that $\CC$ is an additive full subcategory and closed under isomorphisms.
Whenever we write $\CMR$, we always mean a subcategory of $\modR$ consisting of MCM $R$-modules over a Cohen--Macaulay local ring $R$. That is, 
$$
\CMR = \{ M \ | \ \mathrm{Ext} _{R}^{i}(k, M) = 0 \ \text{for all } i < \dim R \} .
$$
Since $R$ is complete, $\modR$, hence $\CC$ and $\CMR$ are Krull-Schmidt categories (cf. \cite[(1.18)]{Y}). 

For an additive category $\CC$, we denote by $\modCC$ the category whose objects are finitely presented contravariant additive functors from $\CC$ to a category of abelian groups and whose morphisms are natural transformations between functors.
$$
\modCC  = \{ F : \CC \to Ab \ | \ \Hom _\CC (\  , N) \to \Hom _\CC (\  , M) \to F \to 0 \text{ with $M, N \in \CC$} \}.
$$ 

Let $\CC$ be a subcategory of $\modR$. 
We denote by $\sCC$ the stable category of $\CC$. 
The objects of $\sCC$ are the same as those of $\CC$, and the morphisms $\sHom _R(M, N):=\Hom _R(M, N)/ \{ M \to P \to N \text{ where  $P$ is free} \}$. 

First, we state a well-known result on $\modsCC$. 
We say that $\CC$ is closed under kernels of epimorphisms if the kernel of each epimorphism $Y \to X \to 0$ with $X, Y \in \CC$ belongs to $\CC$. 

\begin{proposition}\cite[Proposition 3.3]{Y05}
Let $\CC$ be a subcategory of $\modR$ which contains the free $R$-modules and which is closed under kernels of epimorphisms.
Then $\modsCC$ is an abelian category. 
\end{proposition}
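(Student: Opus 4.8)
The plan is to reduce the statement to a standard categorical criterion and then verify its hypothesis by an explicit construction. For any additive category $\mathcal A$ the category $\mathrm{mod}\,\mathcal A$ of finitely presented contravariant functors always has cokernels, and it is abelian if and only if $\mathcal A$ has weak kernels (pseudo-kernels): every morphism $f$ admits a map $g$ with $fg = 0$ through which every map annihilated by $f$ factors, not necessarily uniquely. This is the classical criterion of Freyd and Auslander. Taking $\mathcal A = \sCC$, it therefore suffices to prove that $\sCC$ has weak kernels.

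First I would fix a morphism $\underline f : M \to N$ in $\sCC$ and lift it to a morphism $f : M \to N$ in $\CC$. Since $N$ is finitely generated, choose an epimorphism $\pi : P \to N$ with $P$ free; because $\CC$ contains the free $R$-modules, $P \in \CC$. Then $(f,\pi) : M \oplus P \to N$ is an epimorphism whose source and target lie in $\CC$, so the hypothesis that $\CC$ is closed under kernels of epimorphisms gives $K := \Ker(f,\pi) \in \CC$. Writing $g : K \to M$ and $q : K \to P$ for the two projections restricted to $K$, I claim that $\underline g$ is a weak kernel of $\underline f$.

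To verify the claim, observe that on $K$ we have $f g = -\pi q$, so $fg$ factors through the free module $P$ and hence $\underline f\,\underline g = 0$ in $\sCC$. For the factorization property, let $\underline h : L \to M$ satisfy $\underline f\,\underline h = 0$, meaning $f h$ factors through some free module $F$, say $fh = \beta\alpha$ with $\alpha : L \to F$ and $\beta : F \to N$. Projectivity of $F$ allows a lift $\gamma : F \to P$ with $\pi\gamma = \beta$, so that $fh = \pi t$ for $t := \gamma\alpha$. Then $(h,-t) : L \to M \oplus P$ is killed by $(f,\pi)$ and hence factors as $L \xrightarrow{\tilde h} K \hookrightarrow M \oplus P$; composing with the projection to $M$ gives $g\tilde h = h$ in $\CC$, whence $\underline g\,\underline{\tilde h} = \underline h$. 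Thus $\underline g$ is a weak kernel and the proof is complete modulo the criterion recalled above.

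The two hypotheses on $\CC$ are each used exactly once, and I expect the only real subtlety to be the stable-category bookkeeping. The identity $\underline f\,\underline g = 0$ holds only after discarding a free factor, and an arbitrary factorization of $fh$ through some free module must be funneled through the \emph{particular} free module $P$ fixed in the construction; projectivity of free modules is precisely what makes this funneling possible, and it is the reason the construction requires $\pi : P \to N$ to be a surjection from a free module rather than from an arbitrary object of $\CC$.
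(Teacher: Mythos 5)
Your proof is correct and follows essentially the same route as the paper's source: the paper gives no argument of its own but cites \cite[Proposition 3.3]{Y05}, where Yoshino likewise reduces to the Freyd--Auslander criterion and produces weak kernels in $\sCC$ by exactly this device of enlarging $f$ to the epimorphism $(f,\pi): M\oplus P \to N$ with $P$ free and taking its kernel in $\CC$. Your verification of the stable-category bookkeeping (killing $fg$ via the free factor $P$, and funneling an arbitrary factorization of $fh$ through the fixed $\pi$ by projectivity) is accurate and complete.
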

  
In our result, the functor category $\modsCC$ needs to be an abelian category. 
Thus, in what follows, we assume that $\CC$ is a subcategory of $\modR$ which is closed under kernels of epimorphisms and contains the free modules.
Bear in mind that category $\CMR$ is closed under kernels of epimorphisms and contains the free modules. 
So $\modsC$ is an abelian category.

\begin{remark}\label{A1}
\begin{itemize}
\item[(1)] We should remark that we have the equivalence of categories
$$
\modsCC \cong  \{ F \in \modCC  | F(R)=0\};\quad F \mapsto F \circ \iota , 
$$
where $\iota : \CC \to \sCC$. 
See \cite[Remark 4.16]{Y}.
By the equivalence, for all $F \in \modsCC$, we have $0 \to L \to M \to N \to 0$ such that 
$
0 \to \Hom _R (- , L) \to \Hom _R (- , M) \to \Hom _R (-  , N) \to F \to 0 
$
is exact in $\modCC$. 

\item[(2)] Let $M \in \CC$ and $F \in \modCC$. 
Then $F(M)$ has a right $\End_R (M)$-module structure (see \cite[Remark 4.2] {Y}). 
Moreover one can also show that, for a morphism $\varphi : \sHom _R (-,X) \to F $ in $\modsCC$, the morphism $\varphi (M) : \sHom _R (M,X) \to F(M)$ is a right $\End_R (M)$-module homomorphism. 
\end{itemize}
\end{remark}

\begin{definition}\label{A2}
We denote by $\SpC$ the set of isomorphism classes of the indecomposable $R$-modules in $\CC$ except $R$ and $0$.
$$
\SpC :=\{ \text{an indecomposable $R$-modules in $\CC$ except $R$ and $0$} \}/ \cong 
$$
\end{definition}

The following assignments are due to Krause\cite{Her97}, which play key rolls in this paper.

\begin{definition}\label{A3}\cite{Her97, Kr97}
The assignments
$$
\Sigma : \SpC  \to \modsCC, \quad \gamma : \modsCC \to \SpC
$$
are defined by
$$
\begin{array}{l}
\Sigma (\XX ) := \{ F \in \modsCC \ | \ F(X)=0 \ \text{for all $X \in \XX$} \}\\
\gamma (\FF ) := \{ M \in \SpC \ | \ F(M)=0 \ \text{for all $F \in \FF$} \}.
\end{array}
$$
\end{definition}

We state several basic properties of the assignments $\Sigma$ and $\Gamma$.

\begin{lemma}\label{A4}
Let $\XX$, $\YY$ be subsets of $\SpC$ and $\FF$ and $\GG$ be subcategories of $\modsCC$. 
For the assignments $\Sigma$ and $\gamma$, the following statements hold. 
\begin{itemize}
\item[(1)] If $\XX \subseteq \YY$ then $\Sigma (\XX ) \supseteq \Sigma (\YY )$.

\item[(2)] If $\FF \subseteq \GG$ then $\gamma (\FF ) \supseteq \gamma (\GG )$.

\item[(3)] A subset $\XX$ is contained in $\gamma \circ \Sigma (\XX )$. 
Moreover $\Sigma (\XX ) = \Sigma \circ \gamma \circ \Sigma (\XX )$. 

\item[(4)] A subcategory $\FF$ is contained in $\Sigma \circ \gamma (\FF )$. 
Moreover $\gamma (\FF ) = \gamma \circ \Sigma \circ \gamma (\FF )$. 

\item[(5)] $\Sigma (\XX )$ is a Serre subcategory in $\modsCC$. 
\end{itemize}
\end{lemma}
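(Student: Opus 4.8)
The plan is to read parts (1)--(4) as the formal shadow of an antitone Galois connection, doing everything straight from the definitions, and to reserve the real work for part (5). For (1), suppose $\XX \subseteq \YY$ and take $F \in \Sigma (\YY )$; then $F(Y)=0$ for every $Y \in \YY$, hence in particular for every $Y \in \XX$, so $F \in \Sigma (\XX )$, giving $\Sigma (\YY ) \subseteq \Sigma (\XX )$. Part (2) is the mirror image, interchanging the roles of modules and functors. Next I would establish the two ``unit'' inclusions, i.e.\ the first halves of (3) and (4). For $\XX \subseteq \gamma \circ \Sigma (\XX )$, take $X \in \XX$: every $F \in \Sigma (\XX )$ kills all of $\XX$, so $F(X)=0$, which is precisely the defining condition for $X \in \gamma (\Sigma (\XX ))$. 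The inclusion $\FF \subseteq \Sigma \circ \gamma (\FF )$ is proved identically: any $M \in \gamma (\FF )$ is killed by every member of $\FF$, so each $F \in \FF$ satisfies $F(M)=0$, whence $F \in \Sigma (\gamma (\FF ))$.

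With monotonicity and the unit inclusions in hand, the two equalities in (3) and (4) are the usual triangle identities and need no new input. To get $\Sigma (\XX ) = \Sigma \circ \gamma \circ \Sigma (\XX )$, apply the order-reversing map of (1) to $\XX \subseteq \gamma \circ \Sigma (\XX )$ to obtain $\Sigma \circ \gamma \circ \Sigma (\XX ) \subseteq \Sigma (\XX )$, and specialize the unit inclusion $\FF \subseteq \Sigma \circ \gamma (\FF )$ at $\FF = \Sigma (\XX )$ to obtain the reverse inclusion $\Sigma (\XX ) \subseteq \Sigma \circ \gamma \circ \Sigma (\XX )$. The identity $\gamma (\FF ) = \gamma \circ \Sigma \circ \gamma (\FF )$ is obtained symmetrically, using (2) together with the unit inclusion $\XX \subseteq \gamma \circ \Sigma (\XX )$ specialized at $\XX = \gamma (\FF )$.

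The substance of the lemma is (5), and my plan is to reduce the Serre property to the exactness of the evaluation functors $\mathrm{ev}_X \colon \modsCC \to Ab$, $F \mapsto F(X)$ (using that $\modsCC$ is abelian by the preceding proposition). Since $\Sigma (\XX ) = \bigcap _{X \in \XX} \Ker (\mathrm{ev}_X )$, and an arbitrary intersection of Serre subcategories is again Serre, it suffices to show that each $\mathrm{ev}_X$ is exact, for then $\Ker (\mathrm{ev}_X )$ is a Serre subcategory: in a short exact sequence $0 \to F' \to F \to F'' \to 0$ the evaluated sequence $0 \to F'(X) \to F(X) \to F''(X) \to 0$ is exact, so $F(X)=0$ forces $F'(X)=F''(X)=0$, and conversely $F'(X)=F''(X)=0$ forces $F(X)=0$, yielding closure under subobjects, quotients, and extensions at one stroke.

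I expect the exactness of $\mathrm{ev}_X$ to be the only genuine obstacle, and I would split it into two halves. Right-exactness is cheap: the cokernel of a morphism of finitely presented functors is computed pointwise (the pointwise cokernel is again finitely presented), so $\mathrm{ev}_X$ preserves cokernels. For left-exactness I would invoke Yoneda: the representable functor $\sHom (-,X)$ belongs to $\modsCC$ because $\sHom (R,X)=0$, and Yoneda supplies a natural isomorphism $F(X) \cong \Hom _{\modsCC }(\sHom (-,X), F)$; since $\Hom _{\modsCC }(\sHom (-,X), -)$ is left exact, so is $\mathrm{ev}_X$. Combining the two halves gives exactness, and hence the Serre property, completing (5). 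The one point to verify carefully is that the abelian structure supplied by the preceding proposition makes kernels agree with the pointwise ones, which is exactly what the Yoneda computation secures.
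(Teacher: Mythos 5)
Your proof is correct and follows essentially the same route as the paper: parts (1)--(4) are the same direct Galois-connection manipulations (the paper proves (1) and (3) from the definitions and notes (2) and (4) are symmetric), and for (5) the paper likewise reduces the Serre property to the fact that a short exact sequence in $\modsCC$ evaluates to a short exact sequence of abelian groups at each $X \in \XX$. The only difference is that the paper treats the exactness of evaluation as clear, whereas you justify it carefully via Yoneda (left exactness of $\Hom_{\modsCC}(\sHom_R(-,X),-)$) and pointwise cokernels---a welcome elaboration of the same argument, not a different one.
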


\begin{proof}
We show (1), (3) and (5). 
(2) and (4) follow similarly to (1) and (3) respectively. 
Suppose that $\XX \subseteq \YY$. 
For a functor $F \in \Sigma (\YY )$, $F(\YY ) =0$. 
Thus $F(\XX ) =0$, which $F$ belongs to $\Sigma (\XX )$. 
This shows (1). 
By the definition of $ \Sigma (\XX )$, $\XX$ is contained in $\gamma (F )$ for all functors $F \in \Sigma (\XX )$. 
Thus implies that $\XX \subseteq \gamma \circ \Sigma (\XX )$. 
The inclusion $\Sigma (\XX ) \supseteq \Sigma \circ \gamma \circ \Sigma (\XX )$ holds by (2). 
Take a functor $F \in \Sigma (\XX )$. 
By the definition of $\gamma \circ \Sigma (\XX )$, $F(\gamma \circ \Sigma (\XX )) =0$. 
Hence $F$ belongs to $\Sigma \circ \gamma \circ \Sigma (\XX )$, so that (3) holds. 
Consider an exact sequence in $\modsC$: $0 \to H \to G \to F \to 0$. 
Then one obtains the exact sequence $0 \to H(X) \to G(X) \to F(X) \to 0$ for each $X \in \XX$, so it is clear that $\Sigma (\XX )$ is a Serre subcategory. 
\end{proof}

We state the main theorem of this paper.

\begin{theorem}\label{A5}
The assignment $\XX \mapsto \cl (\XX)$ is a Kuratowski closure operator. 
That is, 
\begin{itemize}
\item[(1)] $\cl (\emptyset) = \emptyset $,
\item[(2)] $\mathcal{X} \subseteq \cl(\mathcal{X} )$, 
\item[(3)] $\cl (\mathcal{X} \cup \mathcal{Y} ) = \cl (\mathcal{X} ) \cup \cl (\mathcal{Y})$, 
\item[(4)] $\cl (\cl ( \mathcal{X}) ) = \cl (\mathcal{X} )$
\end{itemize}
hold for all subsets $\XX$, $\YY$ in $\SpC$. 
\end{theorem}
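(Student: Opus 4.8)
The plan is to deduce axioms (1), (2) and (4) almost directly from Lemma \ref{A4}, and then to concentrate on axiom (3), whose nontrivial half is the only real work. Axiom (2) is precisely the first assertion of Lemma \ref{A4}(3). For axiom (4), note that $\cl(\cl(\XX)) = \gamma\Sigma\gamma\Sigma(\XX)$; since Lemma \ref{A4}(3) gives $\Sigma(\XX) = \Sigma\gamma\Sigma(\XX)$, applying $\gamma$ yields $\gamma\Sigma(\XX) = \gamma\Sigma\gamma\Sigma(\XX)$, i.e. $\cl(\XX) = \cl(\cl(\XX))$. For axiom (1) I would observe that $\Sigma(\emptyset) = \modsCC$ (the defining condition is vacuous), so it remains to show $\gamma(\modsCC) = \emptyset$; equivalently, each $M \in \SpC$ admits a functor not vanishing at $M$. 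The natural witness is the representable functor $\sHomR(-,M)$, which lies in $\modsCC$ and satisfies $\sHomR(-,M)(M) = \sEnd_R(M)$. Because $M$ is indecomposable and not free, $\mathrm{id}_M$ does not factor through a free module, so $\sEnd_R(M) \neq 0$, whence $M \notin \gamma(\modsCC)$.

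For axiom (3), the inclusion $\cl(\XX)\cup\cl(\YY) \subseteq \cl(\XX\cup\YY)$ is immediate from the monotonicity of $\cl$ (a composite of the two order-reversing maps of Lemma \ref{A4}(1),(2)). The substance is the reverse inclusion. I would first record the identity $\Sigma(\XX\cup\YY) = \Sigma(\XX)\cap\Sigma(\YY)$, so that $\cl(\XX\cup\YY) = \gamma(\Sigma(\XX)\cap\Sigma(\YY))$, and then argue by contraposition: assuming $M \notin \cl(\XX)$ and $M \notin \cl(\YY)$, I must produce a single functor $H \in \Sigma(\XX)\cap\Sigma(\YY)$ with $H(M) \neq 0$. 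By hypothesis there are $F \in \Sigma(\XX)$ and $G \in \Sigma(\YY)$ with $F(M) \neq 0$ and $G(M) \neq 0$; by Yoneda these correspond to nonzero maps $\alpha\colon \sHomR(-,M) \to F$ and $\beta\colon \sHomR(-,M)\to G$. Setting $K_\alpha = \Ker\alpha$ and $K_\beta = \Ker\beta$ inside the abelian category $\modsCC$, the quotient $H := \sHomR(-,M)/(K_\alpha + K_\beta)$ is a quotient of $\sHomR(-,M)/K_\alpha \cong \Im\alpha \subseteq F$ and likewise of $\Im\beta \subseteq G$. Since $\Sigma(\XX)$ and $\Sigma(\YY)$ are Serre (Lemma \ref{A4}(5)), hence closed under subobjects and quotients, $H$ lies in both, i.e. in $\Sigma(\XX\cup\YY)$.

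It remains to see that $H(M) \neq 0$, and this is where I expect the main obstacle to lie. Evaluating at $M$ is exact (as used in the proof of Lemma \ref{A4}(5)), so $H(M) = \sEnd_R(M)/\bigl(K_\alpha(M) + K_\beta(M)\bigr)$. By Remark \ref{A1}(2) the subfunctors $K_\alpha, K_\beta$ of $\sHomR(-,M)$ evaluate to right $\sEnd_R(M)$-submodules of $\sEnd_R(M)$, that is, to right ideals, and each is proper because $\alpha(M)$ and $\beta(M)$ are nonzero on $\mathrm{id}_M$ (Yoneda again, so $\mathrm{id}_M \notin K_\alpha(M), K_\beta(M)$). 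The key input is that $\sEnd_R(M)$ is a local ring: $\End_R(M)$ is local since $M$ is indecomposable in a Krull--Schmidt category, and $\sEnd_R(M)$ is a proper quotient of it, hence also local. In a local ring every proper right ideal consists of non-units and so is contained in the unique maximal right ideal; therefore $K_\alpha(M) + K_\beta(M)$ is again a proper right ideal not containing $\mathrm{id}_M$, giving $H(M) \neq 0$. This completes the contrapositive, and hence axiom (3). The crux of the whole argument is thus this locality input, which is exactly what prevents the two proper ideals $K_\alpha(M)$ and $K_\beta(M)$ from summing to the whole ring.
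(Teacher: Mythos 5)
Your proof is correct and takes essentially the same route as the paper: your $H=\sHomR(-,M)/(K_\alpha+K_\beta)$ is precisely the pushout of $\Im f \leftarrow \sHomR(-,M) \to \Im g$ constructed there, and the decisive input in both arguments is the locality of $\sEnd_R(M)$. If anything, your final step --- two proper right ideals of a local ring lie in the unique maximal right ideal, so $\mathrm{id}_M$ survives in $H(M)$ --- spells out explicitly what the paper compresses into the remark that $\sEnd_R(M)$ is an indecomposable free module and the images are cyclic, and your explicit verification of axiom (1) via $\sHomR(-,M)(M)=\sEnd_R(M)\neq 0$ fills in a detail the paper leaves implicit.
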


\begin{proof}
The assertions (1), (2), and (4) follow from the definition and Lemma \ref{A4}. 
To show (3), we now notice that $\sHomR (-, M) \in \modsCC$ for all $M \in \CC$. 
The inclusion $\cl (\XX \cup \YY ) \supseteq \cl (\XX ) \cup \cl (\YY)$ follows from the fact that $\Sigma (\XX \cup \YY ) = \Sigma(\XX ) \cap \Sigma (\YY )$, and the equality is clear. 
To show another inclusion, we take $M \in \cl  (\XX \cup \YY )$. 
Note that $M$ is indecomposable. 
Assume that $M \not\in \cl (\XX ) \cup \cl (\YY)$. 
Then there exist $F \in \Sigma (\XX )$ and $G \in \Sigma (\YY )$ such that  $F(M) \not=0$ and $G(M) \not=0$. 
By Yoneda's Lemma, we have nonzero morphisms $f : \sHom _R (-, M) \to F$ and $g : \sHom _R (-, M) \to G$.
Take a pushout diagram in $\modsCC$:
$$\begin{CD}
\sHom _R (-, M)@>>>  \Im \, f @>>>0\\
@VVV @VVV  \\
 \Im \, g @>>> H @>>> 0\\
 @VVV @VVV  \\
 0 @. 0. 
\end{CD}
$$
Since $\Sigma (\XX)$ and $\Sigma (\YY)$ are Serre subcategories, $\Im \, f \in \Sigma (\XX)$, $\Im \, g \in \Sigma (\YY)$. 
This implies that $H \in \Sigma (\XX \cup \YY)$. 
From the push out diagram we obtain the exact sequence $\sHom _R (-, M) \to \Im \, f \oplus \Im \, g \to H \to 0$.  
Since $\sEnd_R(M)$ is local, $\sEnd_R(M)$ is an indecomposable $\sEnd_R(M)$-free module. 
Moreover $\Im \, f (M)$ and $\Im \, g(M)$ are cyclic modules. 
This concludes that $H(M)$ must be nonzero. 
Therefore we have $H \in \Sigma (\XX \cup \YY)$ such that $H(M) \not=0$. 
This gives the contradiction that $M \in  \cl  (\XX \cup \YY )$, so that $M$ is in $\cl (\XX ) \cup \cl (\YY)$.  
\end{proof}

\begin{corollary}\label{A6}
The assignment $\XX\mapsto\cl(\XX)$ defines a topology on $\SpC$. 
That is a subset $\XX$ of $\SpC$ is closed if and only if $\cl(\XX)=\XX$. 

Particularly $\SpCM$ admits a topological structure concerning the topology.   
\end{corollary}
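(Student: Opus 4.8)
The plan is to invoke the standard fact from point-set topology that a Kuratowski closure operator on a set determines a unique topology whose closed sets are exactly its fixed points, and for which the operator coincides with topological closure. Since Theorem \ref{A5} has already established that $\XX \mapsto \cl(\XX)$ satisfies the four Kuratowski axioms, the only work remaining is to verify that declaring a subset $\XX$ to be closed precisely when $\cl(\XX) = \XX$ yields a legitimate family of closed sets, and that $\cl$ is the associated closure. I would carry out this short verification explicitly rather than merely cite it.

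First I would record that the collection $\{ \XX \subseteq \SpC : \cl(\XX) = \XX \}$ satisfies the axioms for the closed sets of a topology. The empty set is a member by axiom (1), and $\SpC$ is a member because axiom (2) gives $\SpC \subseteq \cl(\SpC) \subseteq \SpC$. Closure under finite unions is immediate from axiom (3): if $\cl(\XX) = \XX$ and $\cl(\YY) = \YY$, then $\cl(\XX \cup \YY) = \cl(\XX) \cup \cl(\YY) = \XX \cup \YY$. The one step that needs a small remark, which I would isolate as the main (though still routine) obstacle, is closure under arbitrary intersections. For it I would first extract monotonicity from axiom (3): if $\XX \subseteq \YY$, then $\XX \cup \YY = \YY$, so $\cl(\YY) = \cl(\XX) \cup \cl(\YY)$, whence $\cl(\XX) \subseteq \cl(\YY)$. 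Given a family $\{\XX_i\}$ of fixed points with $\XX = \bigcap_i \XX_i$, extensivity gives $\XX \subseteq \cl(\XX)$, while monotonicity gives $\cl(\XX) \subseteq \cl(\XX_i) = \XX_i$ for every $i$, hence $\cl(\XX) \subseteq \XX$; thus $\cl(\XX) = \XX$.

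With the closed sets in hand, I would then confirm that $\cl$ is indeed the closure operator of this topology, which is what yields the stated equivalence. By axiom (4) the set $\cl(\XX)$ is itself closed, by axiom (2) it contains $\XX$, and by monotonicity any closed $\YY \supseteq \XX$ satisfies $\cl(\XX) \subseteq \cl(\YY) = \YY$; hence $\cl(\XX)$ is the smallest closed set containing $\XX$. In particular $\XX$ is closed if and only if $\cl(\XX) = \XX$, as asserted. Finally, since $\CMR$ is a subcategory of $\modR$ that contains the free modules and is closed under kernels of epimorphisms, it is one of the admissible categories $\CC$ to which Theorem \ref{A5} applies, so the statement for $\SpCM$ follows as the special case $\CC = \CMR$.
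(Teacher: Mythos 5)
Your proposal is correct and takes the same route as the paper, which states the corollary as the standard consequence of Theorem \ref{A5} that a Kuratowski closure operator determines a topology whose closed sets are its fixed points; you have simply written out the routine point-set verification (including the monotonicity argument for arbitrary intersections) that the paper leaves implicit. Your final observation that $\CMR$ contains the free modules and is closed under kernels of epimorphisms, so that $\SpCM$ is a special case, matches exactly the paper's stated justification.
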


The author thanks Tsutomu Nakamura for telling him the remark below. 

\begin{remark}\label{A7}
Let $\mathrm{GProj}(R)$ be a category of Gorenstein-projective $R$-modules and let $\mathrm{GProj}(R)^{c}$ be the full subcategory consisting of compactly generated modules. 
It has been studied in \cite{Nak22} that the Ziegler spectrum is defined by using the functor category of the stable category of $\mathrm{GProj}(R)^{c}$. 
Suppose that $R$ is Gorenstein. 
Then it is shown in \cite[Theorem 2.33]{Nak22} that we have the triangulated equivalence $\sCMR \cong \underline{\mathrm{GProj}}(R)^{c}$. 
So if $R$ is Gorenstein, the spectrum $\SpCM$ is nothing but the Ziegler spectrum which is considered in \cite{Nak22} restricted to finitely generated ones. 
\end{remark}

We explore a topology on $\SpC$.

\begin{lemma}\label{A12}
Let $X, Y \in \SpC$ with $X \not\cong Y$. 
Suppose that $\sHomR (X, Y) \not=0$. Then $Y \not\in \cl (X)$. 
\end{lemma}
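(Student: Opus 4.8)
The plan is to produce a finitely presented functor $F\in\modsCC$ with $F(X)=0$ and $F(Y)\neq 0$; such an $F$ lies in $\Sigma(\{X\})$ and witnesses $F(Y)\neq 0$, which is exactly what is needed to place $Y$ outside $\gamma(\Sigma(\{X\}))=\cl(X)$. The first thing to observe is why the hypothesis $\sHomR(X,Y)\neq 0$ makes this the substantive case: the obvious candidate $\sHomR(-,Y)$ is no good, since $\sHomR(-,Y)(X)=\sHomR(X,Y)\neq 0$, so it does not vanish on $X$. The idea is therefore to start from $\sHomR(-,Y)$ and quotient out precisely the part coming from maps out of $X$.

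Concretely, I would use that $\Hom_R(X,Y)$ is a finitely generated $R$-module, so that $\sHomR(X,Y)$ is finitely generated as a right $\sEnd_R(X)$-module; I pick classes $f_1,\dots,f_n$ generating it and assemble them into one morphism $\alpha=(f_1,\dots,f_n)\colon X^n\to Y$ in $\sCC$. Then I set
$$
F:=\Coker\bigl(\sHomR(-,\alpha)\colon\sHomR(-,X^n)\to\sHomR(-,Y)\bigr),
$$
which is finitely presented by construction and has $F(R)=0$, hence $F\in\modsCC$. Evaluating at $X$ is immediate: the image of $\sHomR(X,X^n)\to\sHomR(X,Y)$ equals $\sum_i f_i\,\sEnd_R(X)=\sHomR(X,Y)$ by the choice of the $f_i$, so $F(X)=0$ and $F\in\Sigma(\{X\})$.

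The crux is to show $F(Y)\neq 0$, equivalently that $\mathrm{id}_Y\notin\sum_i f_i\,\sHomR(Y,X)$. If instead $\mathrm{id}_Y=\sum_i f_ig_i$ with $g_i\in\sHomR(Y,X)$, then $\alpha$ is a split epimorphism in $\sCC$. Since $\sEnd_R(Y)$ is local, some summand $f_{i_0}g_{i_0}$ is a unit, so $f_{i_0}\colon X\to Y$ already splits; composing a section with $f_{i_0}$ yields an idempotent in the local ring $\sEnd_R(X)$, which must be $0$ or $1$, and a one-line check excludes $0$. Hence $f_{i_0}$ is an isomorphism in $\sCC$, so $X\cong Y$ in $\sCC$, and because $X$ and $Y$ are indecomposable and non-free this forces $X\cong Y$ in $\CC$, contradicting $X\not\cong Y$. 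Thus $F(Y)\neq 0$, and combining with $F\in\Sigma(\{X\})$ gives $Y\notin\cl(X)$. I expect the last step---converting the non-vanishing of $F(Y)$ into a split-epimorphism statement and then extracting $X\cong Y$ from the locality of the stable endomorphism rings---to be the main obstacle; the finite generation ensuring $F\in\modsCC$ and the computation $F(X)=0$ are routine.
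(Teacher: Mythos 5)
Your proof is correct and takes essentially the same route as the paper: both pick generators $f_1,\dots,f_n$ of $\sHomR(X,Y)$ and define $F=\Coker\bigl(\sHomR(-,(f_1,\dots,f_n))\colon \sHomR(-,X^{\oplus n})\to\sHomR(-,Y)\bigr)$, with $F(X)=0$ and $F(Y)\neq 0$ witnessing $Y\notin\cl(X)$. The only differences are cosmetic: the paper takes the $f_i$ to generate $\sHomR(X,Y)$ as an $R$-module (equivalent to your right $\sEnd_R(X)$-module generation, since $R$ acts centrally) and asserts $F(Y)\neq 0$ without proof, whereas you correctly supply the implicit verification via the split-epimorphism and local-ring idempotent argument.
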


\begin{proof}
Take a generator $f_1 , \cdots ,f_n$ of $\sHomR (X, Y)$ as an $R$-module. 
We consider the functor induced by $X ^{\oplus n} \xrightarrow{(f_1 , \cdots .,f_n)} Y$, that is 
$$
\sHomR (-,X ^{\oplus n}) \xrightarrow{\sHomR (-, (f_1 , \cdots ,f_n))}  \sHomR (-, Y) \to F \to 0.
$$
Then $F(X) =0$ and $F(Y) \not=0$. 
This yields that $Y  \not\in \cl (X)$. 
\end{proof}

\begin{proposition}\label{A13}
We have $\cl (X) = \{ X\}$ for all $X \in \SpC$. 
Hence $\SpC$ is $T_1$-space. 
\end{proposition}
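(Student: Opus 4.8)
The plan is to prove the two inclusions $\{X\}\subseteq\cl(X)$ and $\cl(X)\subseteq\{X\}$ and then read off the $T_1$ property from Corollary \ref{A6}. The first inclusion is immediate: applying Lemma \ref{A4}(3) to the singleton $\XX=\{X\}$ gives $X\in\gamma\circ\Sigma(X)=\cl(X)$, hence $\{X\}\subseteq\cl(X)$.

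For the reverse inclusion I would argue by contradiction. Suppose $Y\in\cl(X)$ but $Y\not\cong X$; the goal is to produce a functor $F\in\Sigma(X)$, i.e.\ one with $F(X)=0$, for which $F(Y)\neq0$, since such an $F$ witnesses $Y\notin\gamma\circ\Sigma(X)=\cl(X)$. I would split into two cases according to the stable Hom group $\sHomR(X,Y)$. If $\sHomR(X,Y)\neq0$, then Lemma \ref{A12} applies verbatim and already yields $Y\notin\cl(X)$, a contradiction. If instead $\sHomR(X,Y)=0$, I would take $F$ to be the representable functor $\sHomR(-,Y)$, which lies in $\modsCC$ as recorded in the proof of Theorem \ref{A5}. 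Then $F(X)=\sHomR(X,Y)=0$, so $F\in\Sigma(X)$, while $F(Y)=\sEnd_R(Y)$.

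The one point that needs care is that $\sEnd_R(Y)\neq0$ in this second case. Here I would use that $Y\in\SpC$ is indecomposable with $Y\not\cong R$ and $Y\neq0$; since the only indecomposable free $R$-module up to isomorphism is $R$, the module $Y$ is not free, so $\mathrm{id}_Y$ does not factor through a free module and is therefore nonzero in $\sCC$. Thus $\sEnd_R(Y)\neq0$, giving $F(Y)\neq0$ and the desired contradiction. Combining the two cases yields $\cl(X)=\{X\}$; every singleton is then closed, so by the description of closed sets in Corollary \ref{A6} the space $\SpC$ is $T_1$.

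The main obstacle, such as it is, is recognizing that Lemma \ref{A12} only separates $X$ from those $Y$ admitting a nonzero stable map out of $X$, and that the complementary case $\sHomR(X,Y)=0$ must be treated by hand, where the representable functor does the job precisely because $Y$ is nonfree. One could instead unify both cases by running the cokernel construction of Lemma \ref{A12} for all $n\geq0$ at once: its image in $\sEnd_R(Y)$ consists of composites $Y\to X^{\oplus n}\to Y$, each of which factors through $X\not\cong Y$ and is hence a non-unit of the local ring $\sEnd_R(Y)$, so the image lies in $\srad\,\sEnd_R(Y)\subsetneq\sEnd_R(Y)$ and $F(Y)\neq0$; but the case split is cleaner to present.
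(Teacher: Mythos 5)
Your proof is correct and follows essentially the same route as the paper: the same case split on whether $\sHomR(X,Y)$ vanishes, invoking Lemma \ref{A12} in the nonzero case and the representable functor $\sHomR(-,Y)\in\Sigma(X)$ in the zero case, with nonvanishing of $\sEnd_R(Y)$ secured by the fact that an indecomposable module in $\SpC$ is nonfree. You in fact fill in a detail the paper leaves terse (why $\sHomR(Y,Y)=0$ would force $Y\cong 0$ or $R$), so no gap remains.
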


\begin{proof}
Let $Y \in \SpC$ which is not isomorphic to $X$. 
Suppose that $\sHom _R (X, Y)$ is not empty. 
%Suppose that $\sHom _R (X, Y) \not=0$. 
Then $Y \not\in \cl (X)$  by Lemma \ref{A12}. 
Suppose that $\sHom _R (X, Y) = 0$. 
Then $\sHom _R (- ,Y)$ is contained in $\Sigma (X)$   
Assume that $Y \in \cl (X)$, and then $\sHomR (Y, Y)=0$. 
So that $Y$ is $0$ or $R$. 
This gives contradiction. 
It also indicates that $Y$ is not in $\cl (X)$.  
\end{proof}

From now we focus on $\CMR$, that is we consider the Ziegler spectrum on $\SpCM$.  
The connection between Auslander-Reiten (abbr. AR) sequences and isolated points in $\SpCM$ is well-known. 
The following result is a module version of \cite[Corollary 5.3.32]{Pr09}.

\begin{proposition}\label{A14}
Let $M \in \SpCM$. 
$M$ is an isolated point, that is $\{ M \}$ is open, if and only if there exists an AR sequence ending in $M$. 
\end{proposition}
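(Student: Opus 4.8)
The first thing I would do is translate the topological condition into functor language. Since singletons are closed (Proposition \ref{A13}), the set $\{M\}$ is open exactly when its complement is closed, i.e. when $M \notin \cl(\SpCM \setminus \{M\})$. Unwinding $\cl = \gamma \circ \Sigma$, this says precisely that there is a functor $F \in \Sigma(\SpCM \setminus \{M\})$ with $F(M) \neq 0$; equivalently, there is a finitely presented $F \in \modsC$ with $F(M) \neq 0$ and $F(X) = 0$ for every indecomposable $X \in \SpCM$ with $X \not\cong M$. Throughout I write $E := \sEnd_R(M)$, which is local because $M$ is indecomposable and non-free (as $M \neq R$), and I set $S_M := \sHomR(-, M)/\srad(-, M)$, the simple functor at $M$, so that $S_M(M) = E/\mathrm{rad}\,E \neq 0$ while $S_M(X) = 0$ for all indecomposable $X \not\cong M$.

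For the direction $(\Leftarrow)$, given an AR sequence $0 \to \tau M \to N \xrightarrow{p} M \to 0$, I would consider the finitely presented functor $F = \Coker\bigl(\sHomR(-, N) \xrightarrow{p_*} \sHomR(-, M)\bigr)$. Because $p$ is right almost split in $\sCMR$, every non-isomorphism $X \to M$ out of an indecomposable $X$ factors through $p$; evaluating, this gives $F(X) = 0$ for all indecomposable $X \not\cong M$, while $F(M) = E/\mathrm{rad}\,E \neq 0$ since $p$ is not a split epimorphism (so in fact $F \cong S_M$). Thus $F$ witnesses $M \notin \cl(\SpCM \setminus \{M\})$, and $M$ is isolated.

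For the direction $(\Rightarrow)$, I start from a finitely presented $F$ as reformulated above and reduce to showing that $\srad(-, M)$ is finitely generated; once that holds, $S_M$ is finitely presented and, by the Auslander correspondence between sink maps and almost split sequences for non-free indecomposable MCM modules (cf. \cite{Y}), a generating map is right almost split into $M$ and yields the desired AR sequence. Concretely, I first pick $0 \neq \xi \in F(M)$, which by Yoneda corresponds to a nonzero $\varphi : \sHomR(-, M) \to F$, and replace $F$ by $F' := \Im\,\varphi$. This $F'$ is again finitely presented (as $\modsC$ is abelian), is a quotient $\sHomR(-, M) \twoheadrightarrow F'$, and is still supported only at $M$. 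Writing $F' = \sHomR(-, M)/K'$, finite presentation of $F'$ forces $K'$ to be a finitely generated subfunctor, say $K' = \Im(\delta'_*)$ for some $\delta' : N' \to M$. The vanishing $F'(X) = 0$ for indecomposable $X \not\cong M$ means $K'(X) = \sHomR(X, M) = \srad(X, M)$ there, so $K'$ agrees with $\srad(-, M)$ on every indecomposable except possibly $M$ itself.

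The remaining, and I expect main, point is to also capture $\srad(M, M) = \mathrm{rad}\,E$. The hard part is that $F'$ only guarantees factorization of non-isomorphisms out of objects $X \not\cong M$, not of the radical endomorphisms of $M$, so $\delta'$ need not itself be a sink map. My plan is to choose finitely many radical endomorphisms $r_1, \dots, r_t$ generating $\mathrm{rad}\,E$ as a right $E$-ideal, which is possible since $E$ is Noetherian, and to enlarge $\delta'$ to $\delta'' := (\delta', r_1, \dots, r_t) : N' \oplus M^{t} \to M$. Since $K'(M) \subseteq \mathrm{rad}\,E$ (as $F'(M) = E/K'(M) \neq 0$ and $E$ is local), one gets $\Im(\delta''_*)(M) = K'(M) + \sum_j r_j E = \mathrm{rad}\,E = \srad(M, M)$, while on the other indecomposables $\Im(\delta''_*)$ already equals $\srad(-, M)$; as subfunctors agreeing on all indecomposables coincide, this shows $\Im(\delta''_*) = \srad(-, M)$. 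Hence $\srad(-, M)$ is finitely generated and $\delta''$ is right almost split into $M$, and invoking the cited equivalence for non-free $M$ produces the AR sequence, completing the proof.
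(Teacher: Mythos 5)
Your proof is correct and takes essentially the same approach as the paper's: both directions reduce isolation of $M$ to the existence of a finitely presented functor vanishing off $M$, pass to the image of a Yoneda morphism $\sHomR(-,M) \to F$, use finitely many right-$\sEnd_R(M)$-generators of $\srad(M,M)$, and invoke the standard correspondence with AR sequences. The only cosmetic difference is bookkeeping: the paper quotients by the image of the radical generators and applies Nakayama's lemma to show the simple functor $S_M$ is finitely presented, while you enlarge the kernel to exhibit $\srad(-,M)$ as finitely generated and construct the sink map directly.
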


\begin{proof}
If there exists an AR sequence ending in $M$ we can consider the functor $S_M$ which is obtained from the AR sequence. 
Then $\gamma (S_M) = \SpCM \backslash \{ M \}$ is closed, so that $\{ M \}$ is open.  

Suppose that $M$ is isolated, and then $\SpCM \backslash \{ M\}$ is closed. 
Since $\Sigma (\SpCM \backslash \{ M\})$ is not empty, we take $F \in \Sigma (\SpCM \backslash \{ M\})$. 
Then $F(M) \not=0$ and $F(N) = 0$ if $N \not \cong M$. 
By Yoneda's lemma, we have a nonzero morphism $\rho : \sHomR (-, M) \to F$. 
Since $\Im \rho$ is finitely presented and a subfunctor of $F$, by considering $\Im \rho$ instead of $F$, we may assume that $F$ has a presentation: 
$\sHomR (-, M) \to F \to 0$. 
Take a generator $f_1 , \cdots ,f_m$ of $\srad (M, M)$ which is the radical of $\sHom _R (M, M)$ as a right $\End _R (M)$-module. 
Then the image of $\sHomR (M, (f_1 , \cdots ,f_m)) : \sHomR (M, M^{\oplus m}) \to \sHomR (M, M)$ is $\srad (M, M)$. 
Notice that $\sHomR (M, M)/\srad (M, M) \cong k$ since $M$ is indecomposable.  
We consider the diagram below:
$$
\begin{CD}
0 @.0 @.\\
  @AAA @AAA\\
H_M @>>> S @>>>0 \\
@AAA @AAA \\
 \sHomR (-, M) @>{\rho}>> F@>>>0\\
@AA{f:=\sHomR (-, (f_1 , \cdots ,f_m))}A @AAA \\
\sHomR (-, M^{\oplus m}) @>>> \Im \rho \circ f  @>>>0\\
@. @AAA \\
@.0. @.\\
\end{CD}
$$
We should remark that $S = F/\Im \rho \circ f $ is finitely presented since $\Im \rho \circ f $ is so. 
Now we shall show $S$ is a simple functor with $S(M) \not= 0$. 
First we remark that $S(N) =0$ if $N \not\cong M$. 
Because we have $F(N) \to S(N) \to 0$, and $F(N)=0$ if if $N \not\cong M$. 
If we substitute $M$ into the diagram, then the diagram is one as a right $\End_R (M)$-module.  
Moreover $\rho (f (M) ) =  \rho (\srad (M, M) ) \subseteq \rad F(M)$. 
So we have $S(M) = F(M)/\rad F(M)$. 
If $S(M)=0$, then $F(M) = \rad F(M)$. 
By Nakayama's lemma, $F(M)=0$, which is a contradiction. 
If $S(M) \not=0$, $S(M)$ is isomorphic to $H_M(M) \cong k$. 
This yields that $S$ is a simple functor and we conclude that $M$ admits an AR sequence.
\end{proof}

We say that $(R, \m)$ is an isolated singularity if each localization $R_{\p }$ is regular for each prime ideal $\p$ with $\p \not= \m$. 
Remark that $R$ is an isolated singularity if and only if $\CMR$ admits AR sequences (cf. \cite[Theorem 3.2]{Y}). 
Thus Proposition \ref{A14} induces the following.

\begin{corollary}\label{A15}
The ring $R$ is an isolated singularity if and only if the topology of $\SpCM$ is discrete.
\end{corollary}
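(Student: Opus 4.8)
The plan is to obtain the corollary purely as a combination of Proposition \ref{A14} with the classical characterization of isolated singularities in terms of the existence of Auslander--Reiten sequences, which has already been recorded just above the statement (\cite[Theorem 3.2]{Y}). Recall that the topology on $\SpCM$ is discrete precisely when every singleton $\{ M \}$ is open, i.e.\ when every point $M \in \SpCM$ is isolated. Thus the whole statement reduces to a pointwise criterion, and no genuinely new argument beyond the two cited results is required.

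First I would translate ``discrete'' into the language of isolated points: the topology of $\SpCM$ is discrete if and only if each $M \in \SpCM$ is an isolated point. By Proposition \ref{A14}, a point $M \in \SpCM$ is isolated if and only if there exists an AR sequence ending in $M$. Combining these two equivalences, the topology of $\SpCM$ is discrete if and only if every indecomposable non-free MCM module $M$ (that is, every $M \in \SpCM$) admits an AR sequence ending in it, and this is exactly the condition that $\CMR$ admits AR sequences. Finally, invoking the equivalence ``$R$ is an isolated singularity $\iff$ $\CMR$ admits AR sequences'' closes the chain of equivalences and yields the corollary.

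The step I expect to require the most care is verifying that the quantifier ranges match on both sides of the pointwise reduction. The phrase ``$\CMR$ admits AR sequences'' is understood to mean that an AR sequence ends in every non-free indecomposable MCM module, and these are precisely the objects parametrised by $\SpCM$, since we have excluded $R$ and $0$, and $R$ does not occur as the end term of an AR sequence in $\CMR$. Once this bookkeeping is confirmed, the pointwise equivalence furnished by Proposition \ref{A14} assembles into the global statement without any gap, so the corollary follows formally from the preceding results.
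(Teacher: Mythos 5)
Your proposal is correct and matches the paper's argument exactly: the paper derives Corollary \ref{A15} by combining Proposition \ref{A14} with the remark, stated immediately before the corollary, that $R$ is an isolated singularity if and only if $\CMR$ admits AR sequences (\cite[Theorem 3.2]{Y}). Your extra care about the quantifier ranges (that $\SpCM$ excludes $R$ and $0$, which are precisely the objects not expected to be end terms of AR sequences) is a sound bookkeeping point that the paper leaves implicit.
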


For a locally coherent category ${\mathfrak{C}}$, a bijective correspondence between closed subsets in $\mathsf{Sp}\, \mathfrak{C}$ and Serre subcategories in $\mod \,\mathfrak{C}$ is given. 
See \cite[Theorem 3.8.]{Her97} and \cite[Theorem 4.2.]{Kr97}.  
Unfortunately, in our setting, for a Serre subcategory $\FF \subseteq \modsC$, $\FF \not= \Sigma \circ \gamma (\FF )$ in general.

\begin{example}\label{A8}
Let $R=k[[x, y]]/(x^2)$. 
The indecomposable MCM $R$-modules are $R$, $I=(x)R$ and $I_n  = (x, y^n)R$ for $n \geq 1$. 
As calculated in \cite[Lemma 3.3.]{Hir21}, $\sHomR (X, I_n) \not=0$ for all $X \in \CMR$, that is $\gamma (\sHomR (-, I_n))= \emptyset$. 
Thus one has $\Sigma \circ \gamma (\sHomR (-, I_n))= \Sigma (\emptyset ) = \modsC$. 
However $\SS (\sHomR (-, I_n)) \not= \modsC$. 
Here we denote by $\SS (\sHomR (-, I_n))$ the smallest Serre subcategory which contains $\sHomR (-, I_n)$.  
Since $\KGdim \sHomR (-, I_n) = 1$ \cite[Proposition 3.8]{Hir21}, $\KGdim \SS (\sHomR (-, I_n)) =1$. 
Note that $\KGdim \sHomR (-, I) = 2$. \cite[Proposition 3.11]{Hir21}. 
Hence $\sHomR (-, I) \not\in \SS (\sHomR (-, I_n))$. 
Therefore we have $\SS (\sHomR (-, I_n)) \not= \modsC$.
\end{example}

We seek more about the correspondence. 
As mentioned the above example, $\FF \not= \Sigma \circ \gamma (\FF )$ for a Serre subcategory $\FF$ in $\modsCC$. 
Hence it seems to need an additional assumption to give the bijection. 
%Actually, for a subset $\XX$ in $\SpC$, we have $({}^{\perp} \Sigma (\XX) )^{\perp} = \Sigma (\XX)$.    
For a subcategory $\FF$ of an abelian category $\AA$, we define 
$$
{}^{\perp}\FF = \left\{ G \in \AA \ | \ \Hom _{\AA} (G, \FF ) = 0 \right\}, \quad
\FF ^{\perp} = \left\{ G \in \AA \ | \ \Hom _{\AA} (\FF, G ) = 0 \right\} . 
$$

\begin{proposition}\label{A9}
For a subset $\XX$ in $\SpC$, $({}^{\perp} \Sigma (\XX) )^{\perp} = \Sigma (\XX)$.
\end{proposition}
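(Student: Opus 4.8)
The plan is to split the claimed equality into the two inclusions and dispatch them separately, the first being formal and the second resting on Yoneda's lemma. For the inclusion $\Sigma (\XX) \subseteq ({}^{\perp}\Sigma (\XX))^{\perp}$ I would first record the purely formal fact that $\FF \subseteq ({}^{\perp}\FF)^{\perp}$ for \emph{any} subcategory $\FF$ of an abelian category $\AA$: if $F \in \FF$, then by the very definition of ${}^{\perp}\FF$ every $H \in {}^{\perp}\FF$ satisfies $\Hom_{\AA}(H, F) = 0$, whence $F \in ({}^{\perp}\FF)^{\perp}$. Applying this with $\AA = \modsCC$ and $\FF = \Sigma (\XX)$ gives one of the two inclusions for free.

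The content is in the reverse inclusion $({}^{\perp}\Sigma (\XX))^{\perp} \subseteq \Sigma (\XX)$, and here the idea is to use the representable functors attached to the objects of $\XX$ as test objects inside ${}^{\perp}\Sigma (\XX)$. First I would observe that for each $X \in \XX$ the functor $\sHomR (-, X)$ belongs to $\modsCC$ (as already noted in the proof of Theorem \ref{A5}) and in fact lies in ${}^{\perp}\Sigma (\XX)$. Indeed, Yoneda's lemma supplies a natural isomorphism $\Hom_{\modsCC}(\sHomR (-, X), F) \cong F(X)$, and $F(X) = 0$ whenever $F \in \Sigma (\XX)$ precisely because $X \in \XX$. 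Hence $\Hom_{\modsCC}(\sHomR (-, X), \Sigma (\XX)) = 0$, i.e. $\sHomR (-, X) \in {}^{\perp}\Sigma (\XX)$.

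With these representables identified as left-orthogonal test objects, the reverse inclusion is immediate. Take any $G \in ({}^{\perp}\Sigma (\XX))^{\perp}$; by definition $\Hom_{\modsCC}(H, G) = 0$ for every $H \in {}^{\perp}\Sigma (\XX)$. Applying this to $H = \sHomR (-, X)$ and invoking Yoneda a second time yields $G(X) \cong \Hom_{\modsCC}(\sHomR (-, X), G) = 0$. Since $X \in \XX$ was arbitrary, $G$ vanishes on all of $\XX$, which is exactly the condition $G \in \Sigma (\XX)$. Combining the two inclusions gives $({}^{\perp}\Sigma (\XX))^{\perp} = \Sigma (\XX)$.

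I do not anticipate a serious obstacle: the argument is essentially a double application of Yoneda's lemma, once to certify that each $\sHomR (-, X)$ is left-orthogonal to $\Sigma (\XX)$ and once to translate the right-orthogonality of $G$ back into the pointwise vanishing $G(X) = 0$. The only point requiring care is to ensure that $\sHomR (-, X)$ genuinely belongs to the finitely presented functor category $\modsCC$, so that all the $\Hom$-groups above are computed inside $\AA = \modsCC$; this is guaranteed by the standing hypotheses on $\CC$ and was already used earlier in the paper.
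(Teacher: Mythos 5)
Your proposal is correct and follows essentially the same route as the paper's own proof: one inclusion holds by the formal property $\FF \subseteq ({}^{\perp}\FF)^{\perp}$, and the other is obtained by observing via Yoneda's lemma that each representable $\sHomR(-,X)$ with $X \in \XX$ lies in ${}^{\perp}\Sigma(\XX)$, then applying Yoneda a second time to conclude $G(X)=0$ for every $G \in ({}^{\perp}\Sigma(\XX))^{\perp}$. Your extra care in noting that $\sHomR(-,X)$ belongs to $\modsCC$ is a sound addition but does not change the argument.
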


\begin{proof}
The inclusion $({}^{\perp} \Sigma (\XX) )^{\perp} \supseteq \Sigma (\XX)$ follows by the definition. 
For each $X \in \XX$, a functor $\sHomR(-,X)$ belongs to ${}^{\perp} \Sigma (\XX)$. 
Because $\Hom_{\modsCC} (\sHomR(-,X), F) \cong F(X) = 0$ by Yoneda's lemma for each $F \in \Sigma (\XX)$ and $X \in \XX$.   
Thus it follows from Yoneda's lemma again that $F (\XX) = 0$ for all $F \in ({}^{\perp} \Sigma (\XX) )^{\perp}$. 
Hence the inclusion  $({}^{\perp} \Sigma (\XX) )^{\perp} \subseteq \Sigma (\XX)$ holds. 
\end{proof}

Therefore we should add the assumption that $({}^{\perp} \FF )^{\perp} = \FF$ to give the bijective correspondence. 
We can obtain the following correspondence.

\begin{proposition}\label{A10}
If $R$ is an isolated singularity, we have a bijective correspondence between closed subsets $\XX$ in $\SpCM$ and  Serre subcategories $\FF$ of $\modsC$ with $({}^{\perp} \FF )^{\perp} = \FF$. 
\end{proposition}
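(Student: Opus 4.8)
The plan is to exhibit the bijection via the assignments themselves: $\XX \mapsto \Sigma(\XX)$ from closed subsets to Serre subcategories, and $\FF \mapsto \gamma(\FF)$ in the other direction. First I would verify both maps are well defined. For a closed subset $\XX$, Lemma \ref{A4}(5) says $\Sigma(\XX)$ is Serre, and Proposition \ref{A9} says $({}^{\perp}\Sigma(\XX))^{\perp}=\Sigma(\XX)$, so $\Sigma(\XX)$ lies in the target family. For a Serre subcategory $\FF$ with $({}^{\perp}\FF)^{\perp}=\FF$, Lemma \ref{A4}(4) gives $\cl(\gamma(\FF))=\gamma\circ\Sigma\circ\gamma(\FF)=\gamma(\FF)$, so $\gamma(\FF)$ is closed. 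Since $\XX$ being closed means precisely $\gamma\circ\Sigma(\XX)=\cl(\XX)=\XX$, one composite is the identity by definition, and the whole problem reduces to proving $\Sigma\circ\gamma(\FF)=\FF$ for every Serre $\FF$ with $({}^{\perp}\FF)^{\perp}=\FF$. Here the inclusion $\FF\subseteq\Sigma\circ\gamma(\FF)$ is once more Lemma \ref{A4}(4).

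Next I would deploy the isolated-singularity hypothesis. By Corollary \ref{A15} the topology on $\SpCM$ is discrete, so by Proposition \ref{A14} every $M\in\SpCM$ ends an AR sequence and therefore yields a simple functor $S_M=\sHomR(-,M)/\srad(-,M)$ in $\modsC$, with $S_M(M)=k$ and $S_M(N)=0$ for $N\not\cong M$. Because $\sEnd_R(M)$ is local, $S_M$ is the top of the representable $\sHomR(-,M)$; consequently every nonzero finitely presented functor, being a quotient of a finite sum of representables, has a nonzero top and hence surjects onto some $S_M$. The bookkeeping fact I would isolate is that, as $\FF$ is Serre, $S_M\in\FF$ if and only if $M\notin\gamma(\FF)$: if $S_M\in\FF$ then $S_M(M)\neq0$ gives $M\notin\gamma(\FF)$, while if $F\in\FF$ has $F(M)\neq0$ then Yoneda together with the top of $\sHomR(-,M)$ produces a surjection onto $S_M$ from a subfunctor of $F$, so $S_M\in\FF$. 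Hence $\gamma(\FF)=\{\,M\mid S_M\notin\FF\,\}$.

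It remains to prove $\Sigma\circ\gamma(\FF)\subseteq\FF$, and this is where the condition $({}^{\perp}\FF)^{\perp}=\FF$ is indispensable; I expect this inclusion to be the main obstacle. Let $G\in\Sigma\circ\gamma(\FF)$, so that $G(M)=0$ whenever $S_M\notin\FF$, i.e. $S_M\in\FF$ for every $M$ with $G(M)\neq0$. I would show $\Hom_{\modsC}(H,G)=0$ for all $H\in{}^{\perp}\FF$, which gives $G\in({}^{\perp}\FF)^{\perp}=\FF$. Suppose to the contrary that $\varphi\colon H\to G$ is nonzero with $H\in{}^{\perp}\FF$. Its image $J$ is a nonzero finitely presented subfunctor of $G$, and as a quotient of $H$ it again lies in ${}^{\perp}\FF$. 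By the top argument $J$ surjects onto some $S_M$; then $J(M)\neq0$, and since $J$ is a subfunctor of $G$ and evaluation is exact, $G(M)\neq0$, whence $S_M\in\FF$. But $J\twoheadrightarrow S_M$ gives $\Hom_{\modsC}(J,S_M)\neq0$ with $J\in{}^{\perp}\FF$ and $S_M\in\FF$, a contradiction. The points that need care are that all the functors $S_M$ really belong to $\modsC$ (this is exactly where the existence of AR sequences, hence the isolated-singularity hypothesis, is used) and that finitely presented functors possess nonzero tops, which is what makes the extraction of a simple quotient legitimate.
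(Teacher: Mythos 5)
Your proposal is correct, and while it shares the paper's overall strategy --- reduce the bijection to proving $\Sigma\circ\gamma(\FF)=\FF$ and establish the hard inclusion by showing that any $G\in\Sigma\circ\gamma(\FF)$ lies in $({}^{\perp}\FF)^{\perp}=\FF$ --- it organizes that inclusion genuinely differently. The paper factors the work through Lemma \ref{A11}: taking a minimal presentation $\sHomR(-,W)\to G\to 0$ for $G\in{}^{\perp}\FF$ and ruling out indecomposable summands of $W$ outside $\gamma(\FF)$, it shows every object of ${}^{\perp}\FF$ admits an epimorphism from some $\sHomR(-,X)$ with $X\in \mathrm{add}(\gamma(\FF))$, whence $\Hom_{\modsC}(G,F)$ embeds in $F(X)=0$. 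You instead take an arbitrary nonzero map $H\to G$ with $H\in{}^{\perp}\FF$, pass to its image $J$ (again in ${}^{\perp}\FF$, being a quotient of $H$), extract a simple quotient $S_M$ of $J$, and close the loop through your explicitly proved dictionary $\gamma(\FF)=\{\,M\mid S_M\notin\FF\,\}$, which the paper uses only implicitly. Both arguments invoke the isolated-singularity hypothesis in the same single place: every $M$ admits an AR sequence (Proposition \ref{A14}), so every simple functor $S_M$ is finitely presented. Your route buys two things. First, it dispenses with Lemma \ref{A11} and minimal presentations; your only input of that kind is that a nonzero finitely presented functor has a simple quotient, the same \cite[Lemma 4.12]{Y}-type fact the paper relies on (and which does need a word of justification, via minimal covers in the Krull--Schmidt setting or a Zorn argument on proper subfunctors --- you correctly flag this). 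Second, your treatment of ``$F\in\FF$ with $F(M)\neq0$ implies $S_M\in\FF$'' via the image of the Yoneda morphism $\sHomR(-,M)\to F$ --- a nonzero quotient of the representable, hence surjecting onto its unique simple top $S_M$, and lying in $\FF$ by the Serre property --- is actually more careful than the corresponding step in the paper's Lemma \ref{A11}, which asserts an epimorphism $F\to S_X$ directly from $F(X)\neq0$; read literally this can fail (for $F=\sHomR(-,M)$ with $X\not\cong M$ and $\sHomR(X,M)\neq0$ one has $\Hom_{\modsC}(F,S_X)\cong S_X(M)=0$), and your subfunctor detour is the standard repair. What the paper's organization buys in exchange is the reusable structural statement of Lemma \ref{A11} about ${}^{\perp}\FF$; your version is shorter and self-contained, and the two delicate points you isolate --- that $S_M\in\modsC$ requires AR sequences, and that nonzero finitely presented functors have nonzero tops --- are exactly the right ones.
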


To show the proposition, we need a lemma.

\begin{lemma}\label{A11}
Let $\FF$ be a Serre subcategory of $\modsC$ with $({}^{\perp} \FF )^{\perp} = \FF$. 
Assume that $R$ is an isolated singularity. 
For each functor $G \in {}^{\perp} \FF$, there exist $X \in add (\gamma (\FF ))$ such that $\sHomR ( -, X) \to G \to 0$. 
\end{lemma}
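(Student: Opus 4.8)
```latex
The plan is to produce the required presentation of $G$ by Yoneda-theoretic methods, exploiting the isolated-singularity hypothesis to control where $G$ is supported. The key observation is that when $R$ is an isolated singularity, every indecomposable object $X \in \SpCM$ is isolated (Corollary \ref{A15}), so the topology is discrete and the simple functors $S_X$ arising from AR sequences (as in Proposition \ref{A14}) are available for every $X$. I would begin by taking a projective presentation of $G$ in $\modsC$, which by Remark \ref{A1}(1) has the form $\sHomR(-, M^{\oplus a}) \to \sHomR(-, N) \to G \to 0$ for suitable $M, N \in \CMR$. The task is to show that the ``top'' object $N$ can be replaced by an object lying in $add(\gamma(\FF))$, i.e.\ built from the indecomposables on which every functor in $\FF$ vanishes.

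The central mechanism should be the following. Since $G \in {}^{\perp}\FF$, by definition $\Hom_{\modsCC}(G, F) = 0$ for all $F \in \FF$; combined with the hypothesis $({}^{\perp}\FF)^{\perp} = \FF$ and Yoneda's lemma, this lets me detect exactly which representable functors $\sHomR(-, X)$ land in ${}^{\perp}\FF$. The first real step is to show that if $\sHomR(-, X)$ is a direct summand (or more precisely, appears in the top of a presentation) of an object of ${}^{\perp}\FF$, then the indecomposable summands $X_i$ of $X$ that genuinely contribute must satisfy $X_i \in \gamma(\FF)$. I expect to argue this contrapositively: if some indecomposable summand $X_i \notin \gamma(\FF)$, then there is a functor $F \in \FF$ with $F(X_i) \neq 0$, and via Yoneda this produces a nonzero map $\sHomR(-, X_i) \to F$, which one must leverage to contradict $G \in {}^{\perp}\FF$.

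Concretely, I would take a minimal presentation $\sHomR(-, M^{\oplus a}) \xrightarrow{\,\varphi\,} \sHomR(-, N) \to G \to 0$ with $N$ having no superfluous summands, decompose $N = \bigoplus N_i$ into indecomposables, and show each $N_i \in \gamma(\FF)$. The discreteness of the topology (isolated singularity) is what guarantees that for each indecomposable $N_i$ there is a simple functor $S_{N_i}$ supported only at $N_i$; minimality of the presentation forces a surjection $G \twoheadrightarrow S_{N_i}$ through the top, so $S_{N_i}$ is a quotient of $G$. Then $G \in {}^{\perp}\FF$ together with $\Hom(G, S_{N_i}) \neq 0$ forces $S_{N_i} \notin \FF$, and one checks that $S_{N_i} \notin \FF$ is equivalent to $N_i \in \gamma(\FF)$ using $({}^{\perp}\FF)^{\perp} = \FF$ and the fact that $S_{N_i}$ is the unique simple functor at $N_i$. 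Setting $X = N$ then gives $\sHomR(-, X) \to G \to 0$ with $X \in add(\gamma(\FF))$.

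The main obstacle, I expect, will be establishing the precise dictionary ``$N_i \in \gamma(\FF)$ if and only if $S_{N_i} \notin \FF$'' and ensuring minimality of the presentation transfers correctly to a surjection onto the simple functor $S_{N_i}$. The delicate point is that $\gamma(\FF)$ records vanishing of \emph{all} functors in $\FF$ at a point, whereas the simple functor argument only directly controls one simple functor per point; reconciling these requires the closure condition $({}^{\perp}\FF)^{\perp} = \FF$ to guarantee that $\FF$ is determined by which simples it contains, which in the isolated-singularity (discrete) setting should follow because $\modsC$ has a well-understood simple-object structure indexed by $\SpCM$. Verifying that $\FF$ being closed under this double-perpendicular operation makes it ``simple-saturated'' in the needed sense is the technical heart of the argument.
```
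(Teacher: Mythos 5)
Your plan is essentially the paper's own proof: take a minimal presentation $\sHomR(-,W)\to G\to 0$, decompose $W$ into indecomposables, and for each summand $X$ use the simple functor $S_X$ (available since $R$ is an isolated singularity, so every point admits an AR sequence) to rule out $X\notin\gamma(\FF)$. The only substantive difference is that the step you flag as the ``technical heart'' --- the dictionary between $N_i\in\gamma(\FF)$ and $S_{N_i}\notin\FF$, which you suspect requires a ``simple-saturation'' consequence of $({}^{\perp}\FF)^{\perp}=\FF$ --- is in fact immediate and needs no saturation whatsoever. Only one implication is used, in contrapositive form: if $N_i\notin\gamma(\FF)$, choose $F\in\FF$ with $F(N_i)\neq 0$; then $F$ admits an epimorphism onto $S_{N_i}$ by \cite[Lemma 4.12]{Y} --- the very same fact you already invoke to produce the surjection $G\to S_{N_i}$ from minimality --- and since $\FF$ is Serre, hence closed under quotients, $S_{N_i}\in\FF$. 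This contradicts $G\in{}^{\perp}\FF$ together with $\Hom_{\modsC}(G,S_{N_i})\neq 0$ directly; note that even here the double-perpendicular hypothesis is not what does the work (the paper's proof cites it at this point, but $G\in{}^{\perp}\FF$ and $S_{N_i}\in\FF$ already force $\Hom_{\modsC}(G,S_{N_i})=0$), and the converse direction of your dictionary, though unneeded, is trivial since $S_{N_i}(N_i)\cong k\neq 0$ while every $F\in\FF$ vanishes on $\gamma(\FF)$. With this observation the gap you anticipated dissolves, and your completed argument coincides with the paper's.
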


 \begin{proof}
 For simplicity we denote ${}^{\perp} \FF$ by $\GG$. 
 We have a presentation $\sHomR ( -, W) \to G \to 0$ for every $G \in \GG$. 
 Then we may assume that the presentation is minimal. 
 That is, for any direct summand $W'$ of $W$, $G(W') \not =0$. 
 Assume that there is a direct summand $X$ of $W$ such that $X$ is indecomposable and $X$ does not belong to $\gamma (F)$. 
As mentioned in \cite[Lemma 4.12]{Y}, we have an epimorphism $G \to S_X \to 0$ since $G(X) \not =0$. 
By the assumption of $X$ there exists $F \in \FF$ such that $F(X) \not=0$. 
Then we also obtain an epimorphism $F \to S_X \to 0$. 
Since $\FF$ is a Serre subcategory $S_X$ belongs to $\FF$. 
This implies that $\Hom (G', S_X) =0$ for all $G' \in \GG$ since $\GG ^{\perp} = ({}^{\perp} \FF )^{\perp} = \FF$. 
%(That is, $(\GG, \FF)$ is a torsion pair. )
This is a contradiction, so that we obtain the assertion.      
 \end{proof}

 \begin{proof}[Proof of Proposition \ref{A10}]
 We show $\Sigma \circ \gamma (\FF ) = \FF$ for a Serre subcategory $\FF$ with $({}^{\perp} \FF )^{\perp} = \FF$. 
The inclusion $\supseteq$ is straightforward. 
Set $F$ a functor which belongs to $\Sigma \circ \gamma (\FF )$. 
Note that $F(\gamma (\FF ) ) = 0$. 
By Lemma \ref{A11}, for each $G \in {}^{\perp} \FF $, we have an epimorphism $\sHomR(-, X) \to G \to 0$ with $X \in add (\gamma (\FF ))$. 
Then we have the sequence:
$$
0 \to \Hom _{\modsC} (G, F) \to \Hom_{\modsC} (\sHomR ( -. X) , F). 
$$ 
By Yoneda's lemma $\Hom (\sHomR ( -, X) , F) \cong F(X)=0$. 
Hence $\Hom_{\modsC} (G, F) =0$, so that $F$ belongs to $({}^{\perp} \FF )^{\perp} = \FF$.  
\end{proof}

At the end of this section, we shall give some examples of closed subsets of $\SpC$. 
Let $\AA$ be an additive category and $\CC$ a subcategory. 
We say that $\CC$ is contravariantly finite in $\AA$ if for each $M \in \AA$ there exists $X \in \CC$ such that the restriction $\Hom_\AA (-, X) \to \Hom_\AA (-, M)$ to $\CC$ is surjective. 

\begin{example}\label{A16}
The following subsets are closed subsets in $\SpC$. 
\begin{itemize}
\item[(1)] Every finite subsets are closed. 
\item[(2)] For each subcategory $\FF$ in $\modsCC$, $\gamma (\FF )$ is closed. 
\item[(3)] If $R$ is an isolated singularity then every subsets are closed. 
\item[(4)] If $add (\XX )$ is contravariantly finite in $\sCC$, then $\XX$ is closed. 
\end{itemize}
\end{example}

\begin{proof}
\begin{itemize}
\item[(1)] By Proposition \ref{A13} , $\SpC$ is a $T_1$-space. 
Hence every finite subsets are closed. 
\item[(2)] According to Lemma \ref{A5}(4), $\gamma \circ \Sigma ( \gamma (\FF ) )= \gamma (\FF )$. 
\item[(3)] Suppose that $R$ is an isolated singularity. 
For a subset $\XX$ in $\SpCM$. 
Consider the subcategory $\FF _{\XX}$ consisting simple functors $S_Y$ for all $Y \in \SpCM \backslash \XX$. 
Then $\XX = \gamma (\FF _{\XX})$. 
By (2), $\XX$ is closed. 
\item[(4)] For an $M \in \SpC \backslash \XX$. 
Since $add (\XX )$ is contravariantly finite, we have an exact sequence of functors:
$$
\sHom_R (-, X) \xrightarrow{\sHom_R(-, f)} \sHom_R (-, M) \to F_M \to 0,
$$
where $X$ in $add (\XX )$, $f \in \sHom_R(X,M)$ and $F(\XX ) = 0$. 
Note here that $F(M) \not=0$. 
Assume that $F(M) =0$. 
Then there exists a morphism $g \in \sHom_R(M, X)$ such that $1_M = f \circ g$. 
Thus $f$ is split, so that $M$ is a direct summand of $X$. 
This yields that $M$ belongs to $add (\XX )$. 
It makes a contradiction. 
Consider a subcategory $\FF = \{ F_M \ | \ M \in \SpC \backslash \XX \}$. 
Then one has $\XX = \gamma (\FF )$. 
By (2), $\XX$ is a closed subset. 
\end{itemize}
\end{proof}

%%%%%%%%%%%%%%%%%%%%%%%%%%
%%%%%%%%%%%%%%%%%%%%%%%%%%
\section{Cantor-Bendixson rank}\label{B}

In this section, we calculate a Cantor-Bendixson rank of $\SpCM$. 
The Cantor-Bendixson rank measures how far the topology is from the discrete topology. 
 
\begin{definition}[Cantor-Bendixson rank]\label{B1}\cite[5.3.6]{Pr09}
Let $\TT$ be a topological space. 
If $x \in \TT$ is an isolated point, then $\CB (x) = 0$. 
Put $\TT ' \subset \TT$ is a set of the \underline{\bf non}-isolated point. 
Define the induced topology on $\TT '$. 
Set $\TT ^{(0)} = \TT, \TT ^{(1)} = \TT ^{(0)'}, \cdots, \TT ^{(n+1)} = \TT ^{(n)'}$. 
We define $\CB (x) = n$ if $x \in \TT^{(n)} \backslash \TT^{(n+1)}$
If there exists $n$ such that $\TT^{(n+1)} = \emptyset$ and $\TT^{(n)} \not= \emptyset$, then $\CB ( \TT ) = n$. 
Otherwise $\CB (\TT )= \infty$.
%If $\TT ^{\infty} := \bigcap \TT ^{(n)} \not= \emptyset$, then $\CB (\TT )= \infty$. 
\end{definition}

\begin{example}\label{B2}
Let $R$ be a DVR (e,g. $R=k[[ x ]]$). 
Then $\CB (\Spec R ) = 1$ concerning the Zariski topology. 
Note that $\Spec R  = \{ (0), \m \}$. 
$(0)$ is an isolated point since $D(f) = \{ (0)\}$ for some $f  \in R \backslash \{ 0 \}$. 
Thus $\Spec R ' = \{ \m \} = \Spec R ^{(1)} $, and $\m$ is isolated in the induced topology. 
In the case $R=k[[ x, y]]$, you can show that $\CB (\Spec R)= \infty$. 
Note that $\Spec R ' = \Spec R$. 
\end{example}

By Corollary \ref{A15}, we know $\SpCM$ is a discrete topology if and only if $R$ is an isolated singularity. 

\begin{corollary}\label{B3}
If $R$ is an isolated singularity if and only if $\CB (\SpCM ) = 0$. 
\end{corollary}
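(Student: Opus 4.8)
The plan is to deduce this directly from Corollary~\ref{A15} together with the definition of the Cantor-Bendixson rank, since the statement is essentially a reformulation of the discreteness criterion. Writing $\TT = \SpCM$, I first recall from Definition~\ref{B1} that $\CB(\TT) = 0$ means precisely that $\TT^{(1)} = \emptyset$ while $\TT^{(0)} = \TT \neq \emptyset$, where $\TT^{(1)} = \TT'$ is the subspace of non-isolated points of $\TT$.

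The key observation I would then record is that $\TT' = \emptyset$ is equivalent to the assertion that every point of $\TT$ is isolated, which is exactly the statement that the topology on $\SpCM$ is discrete. Hence, provided $\SpCM \neq \emptyset$, we have $\CB(\SpCM) = 0$ if and only if the topology on $\SpCM$ is discrete. Combining this with Corollary~\ref{A15}, which identifies discreteness of the topology with $R$ being an isolated singularity, immediately yields the equivalence in both directions: from $R$ an isolated singularity one obtains discreteness, hence $\TT' = \emptyset$ and $\CB(\SpCM) = 0$; conversely $\CB(\SpCM) = 0$ forces $\TT^{(1)} = \emptyset$ with $\TT$ nonempty, hence every point is isolated, the topology is discrete, and $R$ is an isolated singularity by Corollary~\ref{A15}.

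I do not expect any genuine obstacle in this argument, as it is a formal consequence of an already-established equivalence. The only point that requires care is the degenerate case in which $\SpCM$ is empty, for instance when $R$ is regular and every MCM module is free; there Definition~\ref{B1} does not assign the value $0$ to the rank. For this reason I would read the statement under the standing assumption that $R$ is not regular, equivalently that $\SpCM$ is nonempty, under which the equivalence holds exactly as phrased.
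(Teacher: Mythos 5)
Your argument is correct and is essentially the paper's own: the paper derives Corollary~\ref{B3} immediately from Corollary~\ref{A15} together with Definition~\ref{B1}, exactly as you do, offering no further proof. Your extra caveat about the degenerate case $\SpCM = \emptyset$ (e.g.\ $R$ regular, where the definition would assign $\CB(\SpCM) = \infty$ rather than $0$) is a reasonable point of care that the paper passes over silently.
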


We say that $R$ is of finite (countable) $\mathrm{CM}$-representation type if there exists only finitely (countably) many isomorphism classes of indecomposable MCM modules. 
If $R$ is of finite $\mathrm{CM}$-representation type, $R$ is an isolated singularity (e.g. \cite[Theorem 4.22]{Y}). 
Hence as another consequence of Corollary \ref{B3}, we obtain the following. 

\begin{corollary}\label{B4}
If $R$ is of finite $\mathrm{CM}$-representation type then $\CB (\SpCM ) = 0$. 
\end{corollary}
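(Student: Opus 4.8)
Looking at the final statement, Corollary \ref{B4}, I need to prove: if $R$ is of finite $\mathrm{CM}$-representation type then $\CB (\SpCM ) = 0$.

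The plan is to observe that this corollary is an immediate consequence of two facts already at hand, so the argument is essentially a composition of implications rather than a new calculation. First I would recall the structural input, cited just before the statement: if $R$ is of finite $\mathrm{CM}$-representation type, then $R$ is an isolated singularity (see \cite[Theorem 4.22]{Y}). This is the only substantive ingredient, and it is imported as a black box; the representation-theoretic content lives entirely in that citation.

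Next I would invoke Corollary \ref{B3}, which gives the equivalence between $R$ being an isolated singularity and $\CB (\SpCM) = 0$. Chaining the two gives the conclusion directly: finite $\mathrm{CM}$-representation type forces $R$ to be an isolated singularity, whence the topology on $\SpCM$ is discrete by Corollary \ref{A15}, so every point is isolated and the Cantor--Bendixson process terminates at the zeroth stage, i.e.\ $\SpCM^{(1)} = \emptyset$. Unwinding Definition \ref{B1}, this is precisely the statement $\CB (\SpCM) = 0$.

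Since the proof is a two-step implication with no genuine obstacle, the only thing requiring a moment's care is making sure the logical direction of Corollary \ref{B3} is used correctly: I need the implication ``isolated singularity $\Rightarrow$ $\CB = 0$,'' which is the forward direction of that equivalence. There is no hard part to flag here; the entire difficulty has been absorbed into the cited classification result that finite $\mathrm{CM}$-representation type implies isolated singularity, and into the earlier Corollary \ref{A15} identifying the isolated-singularity condition with discreteness of the spectrum.
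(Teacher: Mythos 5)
Your proposal is correct and matches the paper's argument exactly: the paper also derives the corollary by citing \cite[Theorem 4.22]{Y} for ``finite $\mathrm{CM}$-representation type implies isolated singularity'' and then applying Corollary \ref{B3}. Nothing further is needed.
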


\begin{remark}\label{B5}
The converse of Corollary \ref{B4} does not hold. 
For instance let $R=k[x,y]/(x^2, y^2)$. 
Then $R$ is an isolated singularity but it is not of finite representation type. 
 \end{remark}

For the Cantor-Bendixson rank, we shall consider a slightly wider class of rings.

\begin{definition}\cite{KLT20}\label{B6}
We say that a Cohen--Macaulay local ring $R$ is $\mathrm{CM}_+$-finite if there exist only finitely many isomorphism classes of indecomposable MCM modules that are \underline{\bf not} locally free on the punctured spectrum. 
\end{definition}

\begin{example}\label{B7}
The following rings are $\mathrm{CM}_+$-finite. 
\begin{itemize}
\item[(1)] A ring which is an isolated singularity. 
Thus a ring which is of finite $\mathrm{CM}$-representation type is $\mathrm{CM}_+$-finite (cf. \cite[Lemma 3.3, Theorem 4.22]{Y}). 
 
\item[(2)] A hypersurface ring which is of countable $\mathrm{CM}$-representation type (cf. \cite{AIT12}.)
\end{itemize}
\end{example}

\begin{theorem}\label{B8}
If $R$ is $\mathrm{CM}_+$-finite then $\CB (\SpCM  ) \leq 1$. 
\end{theorem}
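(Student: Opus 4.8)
The plan is to identify the first Cantor--Bendixson derivative of $\SpCM$ explicitly and show that it is a \emph{finite} set, from which the bound on the rank follows formally. The starting point is Proposition \ref{A14}, which characterizes the isolated points of $\SpCM$ as exactly those $M$ admitting an AR sequence ending in $M$. I would combine this with the classical existence theorem for almost split sequences due to Auslander: over a complete (hence Henselian) Cohen--Macaulay local ring, an indecomposable non-free MCM module $M$ admits an AR sequence ending in $M$ if and only if $M$ is locally free on the punctured spectrum (cf. \cite{Y}). Consequently, the isolated points of $\SpCM$ are precisely the indecomposable MCM modules, other than $R$, that are locally free on the punctured spectrum.

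First I would record the resulting description of the derived set. By Definition \ref{B1}, $\SpCM^{(1)}$ is the set of non-isolated points of $\SpCM$, which by the identification above is exactly the set of isomorphism classes of indecomposable MCM modules that are \textbf{not} locally free on the punctured spectrum (the modules $R$ and $0$ being locally free, their exclusion from $\SpCM$ causes no discrepancy). This is where the hypothesis enters: since $R$ is $\mathrm{CM}_+$-finite in the sense of Definition \ref{B6}, this set is finite.

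Next I would pass to the induced topology on $\SpCM^{(1)}$ and argue that it is discrete. By Proposition \ref{A13} the space $\SpCM$ is $T_1$, and the $T_1$ separation axiom is inherited by every subspace, so $\SpCM^{(1)}$ is itself $T_1$. A finite $T_1$-space is discrete, hence every point of $\SpCM^{(1)}$ is isolated in its induced topology. Therefore $\SpCM^{(2)} = (\SpCM^{(1)})' = \emptyset$, and Definition \ref{B1} yields $\CB (\SpCM ) \leq 1$, with equality exactly when some indecomposable MCM module fails to be locally free on the punctured spectrum.

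The only genuinely non-formal ingredient is the equivalence between the existence of an AR sequence ending in $M$ and local freeness of $M$ on the punctured spectrum; once this is in hand, the $\mathrm{CM}_+$-finiteness hypothesis translates directly into finiteness of $\SpCM^{(1)}$, and the remainder is bookkeeping about the Cantor--Bendixson derivative. The main point to get right is that the derivative is formed with respect to the \emph{subspace} topology, so I must ensure that finiteness of $\SpCM^{(1)}$ itself (rather than of some ambient data) forces discreteness there; this is exactly what the inherited $T_1$ property guarantees.
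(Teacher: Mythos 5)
Your proof is correct and follows essentially the same route as the paper: both identify the derived set $\SpCM^{(1)}$ with the finite set of indecomposables not locally free on the punctured spectrum via Proposition \ref{A14} together with Auslander's existence theorem for AR sequences, and then conclude that this finite subspace is discrete (the paper spells out the finite-$T_1$-implies-discrete argument by writing the complement of each point as a finite union of closed singletons, which is exactly your reasoning). No gaps; your version is just a slightly more streamlined packaging of the paper's argument.
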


\begin{proof}
We denote by $\CMR _0$ the subset of $\SpCM$ consisting of modules that are locally free on the punctured spectrum and put $\CMR _{+} := \SpCM \backslash \CMR_0$. 
For all $M \in \CMR_0 $, $M$ is an isolated point since $M$ admits an AR sequence. 
Thus $\CB (\CMR _0) = 0$. 

On the other hand, for all $M \in \CMR_+ $, $M$ is not isolated. 
Since $R$ is $\mathrm{CM}_+$-finite, $\CMR_+ $ is a finite set. 
Hence, for each $M \in \CMR_+ $,
$$
V_M := \bigcup_{X \not=M,  X \in \CMR_+ } \cl (X)
$$
is a finite union, so that it is closed in $\SpCM$.  
Thus  $ \CMR_+  \bigcap \left[\SpC \backslash V_M \right] = \{ M \}$ is open in the induced topology $\CMR_+ \cap \SpCM$. 
Hence we have $M \in \SpCM^{(1)} \backslash \SpCM^{(2)}$ for all $M \in \CMR_+ $. 
Therefore $\CB (\SpCM ) \leq 1$. 
\end{proof} 
%%%%%%%%%%%%%%%%%%% 
%%%%%%%%%%%%%%%%%%% 
%%%%%%%%%%%%%%%%%%% 
\subsection*{Acknowledgment}
The author was partly supported by JSPS KAKENHI Grant Number 21K03213.
He would like to express his deep gratitude to Ryo Takahashi and Yuji Yoshino for their valuable discussions and helpful comments. 
He also thanks Tsutomu Nakamura for telling him Remark \ref{A7}.

\end{document}